\def\@settitle{\begin{center}%
    \baselineskip14\p@\relax
    \bfseries
    \MakeUppercase{\@title}
  \end{center}%
}
\newtheorem{theorem}{Theorem}%[section]
\newtheorem{lemma}{Lemma}
\numberwithin{lemma}{section}
\newtheorem{proposition}{Proposition}%[section]
\newtheorem{corollary}{Corollary}%[section]
\theoremstyle{remark}
\newtheorem{example}{Example}%[section]
\theoremstyle{definition}
\newtheorem{definition}{Definition}%[section]
\numberwithin{equation}{section}
\def\C{{\mathbb C}}
\def\Q{{\mathbb Q}}
\def\Cn*{{\C^n}^*}
\def\T{{\mathbb T}}
\def\L{{\mathbb L}}
\def\S{{\mathbb S}}
\def\rank{{\rm rank\:}}
\def\R{{\mathbb R}}
\def\Z{{\mathbb Z}}
\def\codim{{\rm codim\:}}
\def\codima{{\rm codim_a}}
\def\vol{{\rm vol}}
\def\ES{{\rm ES}}
\def\EAS{{\rm EAS}}
\def\rfm{{\rm RFD}}
\def\E{\:{\rm e}}
\def\re{{\rm Re\:}}
\def\im{{\rm Im\:}}
\def\supp{{\rm supp\:}}
\newcounter{par}
\newcounter{spr}
\begin{document}
\title{
Ring of conditions for
${\mathbb C}^\MakeLowercase{n}$}
\author{Boris Kazarnovskii}
\address {Institute for Information Transmission Problems \newline
%19 B.Karetny per.,127051, Moscow, Russia,\newline
{\it kazbori@gmail.com}.}
\begin{abstract}
The exponential sum (ES)
is a linear combination of characters of an additive group $\mathbb C^n$.
The exponential analytic set (EAS) is a set of common zeroes of
a finite tuple of ESs.
We consider ES and EAS as an analogs of Laurent polynomial
and of algebraic variety in complex torus $(\C\setminus0)^n$.
Respectively we construct the ring of conditions for $\mathbb C^n$ as
an analog of the ring of conditions for $(\C\setminus0)^n$.
%The main results of the article are
%1) the formula for the density of EAS of common zeroes
%of a perturbed system $f_i(z+w_i)$ of any $n$ ESs $f_i(z)$ and
%2) the construction of the ring of conditions for an intersection theory of EASs.
%We look at ES as an analog of Laurent polynomial.
%Respectively we construct the ring of conditions of $\mathbb C^n$ as
%an analog of the ring of conditions of a torus,
%and at the density formula as an analog of the BKK-formula.
The construction of this ring is based on the definition of associated  to \EAS\
algebraic subvariety of some multidimensional torus
and on the applying tropical algebraic geometry to this subvariety.
Just as in the case of a torus,
the ring of conditions is generated by hypersurfaces.
This preprint is %shortened version of the text
an extended summary of the article
proposed to ''Izvestiya: Mathematics''.
%Its structure can be described in the following three ways:
%1) as a ring of convex polyhedra in a complex space
%2) as a ring of exponential tropical varieties in $\C^n$ and
%3) as a ring generated by currents
%being the values of a complex Monge–Ampere
%operator at real piecewise linear functions in $\C^n$.
\end{abstract}
\maketitle
\section{\ES s, \EAS s, windings and models}
\label{EAS s}
\emph{Exponential sum} (\ES) is a function
in $\C^n$ of the form
$$f(z)=\sum_{\lambda\in\Lambda\subset{\C^n}^*,\:c_\lambda\in\C}c_\lambda \E^{\langle z,\lambda\rangle},$$
where $\Lambda$ is a finite subset of the dual space ${\C^n}^*$.
The set $\Lambda$ is called the \emph{support} of \ES.
The \emph{Newton polytope} of \ES\ is a convex hull of its support.
If $\Lambda\subset\re{\C^n}^*$ then we say that $f$ is a \emph{quasialgebraic} \ES.
The set of common zeroes of a finite tuple of (quasialgebraic) \ES s
is called (quasialgebraic) \emph{exponential analytic set} (\EAS).
The ring of \ES s consists of linear combinations of
characters of the additive group $\C^n$,
i.e. this ring is similarly to the ring of Laurent polynomials,
which are linear combinations of characters of a torus $(\C\setminus0)^n$.
Focusing on this similarity,
we construct \emph{the ring of conditions}
for the inter\-sec\-tion theory for quasialgebraic \EAS s.
%The construction is based on the definition of associated  to \EAS\
%algebraic subvariety of some multidimensional torus
%and on the applying tropical algebraic geometry to this subvariety.
The construction is based on the definition of
some algebraic variety,
associated  to \EAS,
%algebraic subvariety of some multidimensional torus
and on the applying tropical algebraic  geometry. %to this subvariety.
The intersection theory for arbitrary \EAS s will be described in the following publications.

Let $G\subset\C^{n*}_+$ be a finitely generated subgroup.
Suppose that $G$ contains some basis of a dual space ${\C^n}^*$.
Let $\T_G=(\C\setminus0)^q$ where $q=\rank G$.
Choosing a basis $\lambda_1,\ldots,\lambda_q$ of $G$
we consider the homomorphism
$$\omega_G\colon z\mapsto(\E^{\langle z,\lambda_1\rangle},\ldots,\E^{\langle z,\lambda_q\rangle})\in\T_G$$
of the additive group $\C^n _+$ to the torus $\T_G$.
%denote by $E_G$ the ring of \ES s with supports in $G$.
%
\begin{definition} \label{dfWinding}
The image $\omega_G(\C^n)$
is called %the image $\omega_G(\C^n)$ of $\omega$
\emph{the standard winding} of the torus $\T_G$,
and the mapping $\omega_G\colon{\C^n}\to\T$ itself
is called \emph{the mapping of standard winding}.
If $\omega_G(\im\C^n)$ is contained in the compact subtorus of the torus $\T_G$
(i.e. if $G\subset{\rm Re}\:{\C^n}^*$),
then we say that the winding is quasialgebraic.
\end{definition}
\begin{corollary} \label{corCharTorus}
The torus $\T_G$ is a torus of characters of the group $G$.
The invariant definition of the mapping of standard winding
%$\omega_G\colon\C^n\to\T_G$
is
$\omega_G(z)(g)=\E^{\langle z,g\rangle}$.
\end{corollary}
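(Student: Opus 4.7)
The plan is to unpack the two assertions and verify each one essentially by construction, with the bulk of the work being careful bookkeeping of identifications rather than any substantive mathematics.

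First I would observe that $G$ is a finitely generated subgroup of the vector space ${\C^n}^*$, hence torsion-free; together with the hypothesis $\rank G=q$ this gives $G\cong\Z^q$ via the choice of basis $\lambda_1,\dots,\lambda_q$. Then the character group $\mathrm{Hom}(G,\C\setminus 0)$ is canonically isomorphic, via evaluation on the $\lambda_i$, to $(\C\setminus 0)^q=\T_G$. Concretely, a character $\chi\in\mathrm{Hom}(G,\C\setminus 0)$ is sent to $(\chi(\lambda_1),\ldots,\chi(\lambda_q))$, and conversely a point $(t_1,\ldots,t_q)\in\T_G$ determines the character $\sum a_i\lambda_i\mapsto \prod t_i^{a_i}$. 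This establishes the first sentence of the corollary.

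Next I would define the candidate invariant map $\tilde\omega_G\colon \C^n\to\mathrm{Hom}(G,\C\setminus 0)$ by $\tilde\omega_G(z)(g)=\E^{\langle z,g\rangle}$. The only thing to check here is that for fixed $z$ the assignment $g\mapsto\E^{\langle z,g\rangle}$ is a group homomorphism from the additive group $G$ to the multiplicative group $\C\setminus 0$, which follows immediately from $\E^{\langle z,g_1+g_2\rangle}=\E^{\langle z,g_1\rangle}\E^{\langle z,g_2\rangle}$ and the $\C$-linearity of $\langle z,\cdot\rangle$.

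Finally I would show that under the identification $\mathrm{Hom}(G,\C\setminus 0)\cong\T_G$ from the first step, $\tilde\omega_G$ becomes the original $\omega_G$. This amounts to evaluating the character $\tilde\omega_G(z)$ on the basis $\lambda_1,\dots,\lambda_q$: the resulting tuple is $(\E^{\langle z,\lambda_1\rangle},\ldots,\E^{\langle z,\lambda_q\rangle})$, which is exactly $\omega_G(z)$ by Definition \ref{dfWinding}. Since no step in the argument references the basis except to realize the isomorphism $\mathrm{Hom}(G,\C\setminus 0)\cong\T_G$, the formula $\omega_G(z)(g)=\E^{\langle z,g\rangle}$ is indeed basis-free, proving the invariance claim.

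There is no real obstacle here; the one place to be pedantic is to confirm that the definition of $\tilde\omega_G$ does not depend on a presentation of $g$ in terms of the $\lambda_i$ — which is automatic, since $\langle z,g\rangle$ is defined directly from the pairing $\C^n\times{\C^n}^*\to\C$ without reference to any basis.
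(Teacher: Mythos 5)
Your proof is correct and is exactly the routine verification the paper intends: the paper states this corollary without proof as an immediate consequence of Definition \ref{dfWinding}, and your identification of $\T_G$ with $\mathrm{Hom}(G,\C\setminus 0)$ via the chosen basis, followed by checking that $g\mapsto\E^{\langle z,g\rangle}$ is a character agreeing with $\omega_G(z)$ on $\lambda_1,\dots,\lambda_q$, is the standard argument. No issues.
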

\begin{corollary} \label{corWinding1}
The standard winding is dense in Zariski topology.
\end{corollary}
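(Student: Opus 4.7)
The plan is to prove the contrapositive by showing that no nonzero Laurent polynomial on $\T_G$ vanishes identically on the image $\omega_G(\C^n)$. Suppose $P(w) = \sum_{\alpha\in A} c_\alpha w^\alpha$ is a Laurent polynomial with $A \subset \Z^q$ finite. Pulling back by $\omega_G$ and using the definition $\omega_G(z) = (\E^{\langle z,\lambda_1\rangle},\ldots,\E^{\langle z,\lambda_q\rangle})$ gives
$$P(\omega_G(z)) = \sum_{\alpha\in A} c_\alpha \E^{\langle z,\mu_\alpha\rangle}, \qquad \mu_\alpha := \alpha_1\lambda_1+\cdots+\alpha_q\lambda_q \in G,$$
which is itself an \ES\ on $\C^n$ with support $\{\mu_\alpha : \alpha \in A\}\subset G$. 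So the goal reduces to showing that this \ES\ is not identically zero unless all $c_\alpha = 0$.

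First I would observe that $G$, being a subgroup of the complex vector space ${\C^n}^*$, is torsion-free, so its $\Z$-basis $\lambda_1,\ldots,\lambda_q$ is $\Z$-linearly independent. Hence distinct multi-indices $\alpha\in A$ produce pairwise distinct frequencies $\mu_\alpha\in{\C^n}^*$. Equivalently, by Corollary \ref{corCharTorus}, the monomials $w^\alpha$ correspond to pairwise distinct characters of $G$, so $P\circ\omega_G$ is a genuine \ES\ with distinct exponents.

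It then suffices to invoke linear independence of characters of the additive group $\C^n$: if $\mu_1,\ldots,\mu_k \in {\C^n}^*$ are distinct, the functions $z\mapsto \E^{\langle z,\mu_j\rangle}$ are linearly independent on $\C^n$. This can be reduced to the one-dimensional classical fact by restricting to a generic complex line $z = tv$; for $v$ outside the countable union of hyperplanes $\{\langle v,\mu_i-\mu_j\rangle=0\}$ the numbers $\langle v,\mu_j\rangle$ are pairwise distinct, and the linear independence of $\E^{c_1 t},\ldots,\E^{c_k t}$ for distinct $c_j$ follows from a Wronskian (or successive differentiation) argument. Consequently all $c_\alpha$ must vanish and $P\equiv 0$.

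No single step here is a real obstacle; the only point that deserves care is confirming the injectivity of $\alpha\mapsto\mu_\alpha$, i.e. the torsion-freeness of $G$, so that the pullback of a nonzero Laurent polynomial is again an \ES\ of the kind appearing in Section \ref{EAS s} (with a well-defined support). Everything else is the standard independence of exponential characters.
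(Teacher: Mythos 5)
Your proof is correct and is exactly the standard argument the paper implicitly relies on (it states this corollary without proof): a nonzero Laurent polynomial pulls back under $\omega_G$ to an \ES\ with pairwise distinct frequencies (since the $\Z$-basis of the torsion-free group $G$ makes $\alpha\mapsto\mu_\alpha$ injective), and linear independence of distinct characters of $\C^n$ then forces all coefficients to vanish. This is also precisely the injectivity half of Corollary \ref{corWinding2}, so your argument doubles as the proof of that statement; the only cosmetic remark is that the bad set of directions $v$ is a finite, not countable, union of hyperplanes for a fixed finite support.
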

\begin{corollary} \label{corWinding2}
Let $E_G$ be a ring of \ES {\rm s} with supports in $G$.
Then the mapping
$\omega_G^*\colon\C[\T_G]\to E_G$
is an isomorphism of the rings.
\end{corollary}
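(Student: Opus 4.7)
The plan is to verify three things: that $\omega_G^*$ is a well-defined ring homomorphism with image in $E_G$, that it is surjective, and that it is injective. The homomorphism property is free: pullback of regular functions along any morphism is a ring map, so I only need to identify $\omega_G^*$ on monomials and read off the other two properties.

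First I would observe that $G\subset{\C^n}^*$ is torsion-free (being a subgroup of a $\C$-vector space) and finitely generated, hence $\Z$-free of rank $q$; thus $\lambda_1,\ldots,\lambda_q$ form a $\Z$-basis of $G$. Writing the coordinates on $\T_G$ as $t_1,\ldots,t_q$, the definition of $\omega_G$ gives
$$\omega_G^*(t_1^{a_1}\cdots t_q^{a_q}) \;=\; \E^{\langle z,\,a_1\lambda_1+\cdots+a_q\lambda_q\rangle},$$
and the assignment $(a_1,\ldots,a_q)\mapsto\sum a_i\lambda_i$ is a bijection $\Z^q\to G$. In particular every monomial is sent to a character with exponent in $G$, so $\omega_G^*(\C[\T_G])\subset E_G$.

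Surjectivity is then immediate from the description of $E_G$: by definition, $E_G$ is spanned over $\C$ by the characters $\E^{\langle z,g\rangle}$ for $g\in G$, and each such character is the image of the unique monomial $t^a$ corresponding to $g$ under the bijection above. For injectivity, suppose $P\in\C[\T_G]$ satisfies $\omega_G^*(P)=0$, i.e.\ $P$ vanishes on the winding $\omega_G(\C^n)\subset\T_G$. Since by Corollary~\ref{corWinding1} this winding is Zariski dense in $\T_G$, and $P$ is a regular function, $P=0$.

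The only substantive content is the Zariski density statement, which is already packaged as Corollary~\ref{corWinding1}; once that is invoked, every other step reduces to the remark that $\lambda_1,\ldots,\lambda_q$ is a $\Z$-basis of $G$. So I do not anticipate a genuine obstacle in the proof of \ref{corWinding2} itself, beyond a careful bookkeeping between the $\Z$-linear structure on $G$ and the monomial structure on $\C[\T_G]$.
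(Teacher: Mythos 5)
Your proof is correct and follows exactly the route the paper intends: the paper states this as an unproved corollary, with the pullback of torus characters giving the characters $\E^{\langle z,g\rangle}$ that span $E_G$ (surjectivity) and the Zariski density of the winding from Corollary~\ref{corWinding1} giving injectivity. Your additional bookkeeping that $G$ is free of rank $q$, so monomials biject with elements of $G$, is the right way to make the identification precise.
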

\begin{definition} \label{dfModel}
Let $X$ be an \EAS\ with ideal of equations $I\subset E_G$.
Keeping the notation $I$ for the corresponding ideal of $\C[\T_G]$
we say that the zero variety $M_G\subset\T_G$ of $I$
is a \emph{model} of \EAS\ $X$.
\end{definition}
\begin{definition} \label{dfAlg1}
If the variety $M_G$ is equidimensional,
then \EAS\ $X$ is also called \emph {equidimensional}.
Denote $\codim M_G$ by $\codima X$ and
call \emph{algebraic codimension} of \EAS\ $X$.
\end{definition}
 %
 %\noindent
 Note that both the equidimensionality and the algebraic codimension of $X$
 do not depend on the choice of the group $G$,
such that the equations of \EAS\ $\:X$ are contained in the ring $E_G$.
Any \EAS\ is a union of a finite number of equidimensional \EAS s
of different algebraic codimensions.
Next, by default,
any \EAS\ is  assumed to be equidimensional.
Equidimensional \EAS\ $X$ of algebraic codimension $n$
is an analogue of a zero-dimensional algebraic variety.
The set of points of such \EAS\ is infinite.
For example, the set of zeros of the function $\E^z-1$ in $\C^1 $ is $ 2\pi i\:\Z$.
An analogue of the number of points of zero-dimensional algebraic variety
is a \emph{weak density} $d_w(X)$,
see the Definition \ref{dfWeakDensity}.
\begin{example}[см. \cite{K97, Z02, BMZ07}]\label{exAnomal}
Let $X$ be an \EAS\ with the equations $f = g = 0$, $f,g\in E_G$.
If $f,g$ have no common divisor in the ring $E_G$,
then $\codima X = 2$,
otherwise $\codima X = 1$.
%In particular,
For example,
the algebraic codimension of the point $0\in\C$,
considered as the \EAS\ with the equations
$\E^z-1 = \E^{\sqrt{2}z} -1 = 0$,
%$\E^z-1 = \E^{\pi z} -1 = 0$,
is equal to $2$.
Therefore,
the codimension of \EAS, as an analytic set,
can be less than $\codima X$.
Let $(X,z)$ be an irreducible germ of \EAS\ $X$ at
%the point
$z\in X$.
If the codimension of $(X,z)$ less than $\codima X$,
then the germ is said to be \emph{atypical}.
It is known, that any atypical germ of \EAS\
belongs to some
proper affine subspace of $\C^n$.
In particular,
any
atypical component of  \EAS\ of algebraic
codimension $2$ in $\C^2$ is an affine line.
\end{example}
\section{Tropicalization and intersection index of \EAS s}\label{Index}
Next, we consider quasialgebraic \ES s only.
We use the %following
notations:
\par\smallskip

$\mathcal T_G$ is a Lie algebra of $\T_G$

$V=\re\mathcal T_G$ is the space of one-parameter subgroups $\T_G$, $N=\dim V$

$\R^n=\re\C^n$

$s_G\colon\R^n\to\re\mathcal T_G$ is
a restriction of differential $d\omega_G$ to the space $\R^n$.
\par\smallskip
Here we define the tropicalization of \EAS.
Its independence from the choice of the group $G$ is proved in the section \ref{Equasi}.
The tropical notions used below are defined in Section \ref{pullBack}.

Let $\mathcal K\subset\re\mathcal T_G$ be a tropical fan of
algebraic variety $M_G\subset\T_G$,
$L_G=s_G(\R^n)\subset\re\mathcal T_G$,
and let $\mathcal L_G$  be a tropical fan
consisting of a cone $L_G$ with the Euclidian weight $w(L_G)=1$;
see subsection \ref{tropVar}.
Consider the product $\mathcal L_G\cdot\mathcal K$
of Euclidian tropical varieties.
The support of a tropical fan $\mathcal L_G\cdot\mathcal K$ is contained in $L_G$.

Let $s\colon U\to V$ be a linear operator.
In Subsection \ref{pullBacks},
for any tropical variety $\mathcal M\subset V$
we define the tropical variety $s^*(\mathcal M)$ in $U$,
called the pull back of $\mathcal M$.
\begin{definition} \label{dfTropicalizationG}
Let $M_G$ be a model of \EAS\ $X$ and
$\mathcal K$ be a tropicalization of algebraic variety $M_G$.
The tropical variety $X^{\rm trop}=s_G^*(\mathcal L_G\cdot\mathcal K)\subset\R^n$
is called a \emph{tropicalization of} \EAS\ $X$.
\end{definition}
Let $\Delta\subset{\R^n}^*$ be a convex polyhedron
and let $K_\Lambda\subset\R^n$ be a dual cone of a face $\Lambda$ of $\Delta$.
The set of cones $\mathcal K_{\Delta,k}=\{K_\Lambda\colon\:\Lambda\subset\Delta,\:\dim\Lambda\geq k\}$
form the $(n-k)$-dimensional fan of cones.
For $\dim\Lambda=k$ we put $w(K_\Lambda)$ equal to
$k$-dimensional volume $\vol_k(\Lambda)$ of $\Lambda$.
Then  $\mathcal K_{\Delta,k}$ is a Euclidean tropical fan.
%(see Proposition \ref{prKDeltaProduct}).
%
\begin{corollary}
Let $X=\{z\in\C^n\colon\:f(z)=0\}$ be a quasialgebraic exponential hypersurface.
Then $X^{\rm trop}=\mathcal K_{\Delta,1}$
where $\Delta$ is a Newton polyhedron of \ES\ $f$.
\end{corollary}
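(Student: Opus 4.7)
The plan is to reduce to the classical description of the tropical variety of a Laurent hypersurface on a torus, and then track how this description transforms under the product with $\mathcal L_G$ and the pullback $s_G^*$.

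First I would fix a group: take $G\subset\re{\C^n}^*$ to be the subgroup generated by the support $\Lambda$ of $f$, enlarged if necessary so that $G$ contains an $\R$-basis of $(\R^n)^*$. Choose a basis $\lambda_1,\dots,\lambda_q$ of $G$, so that the isomorphism $\omega_G^*\colon\C[\T_G]\to E_G$ of Corollary~\ref{corWinding2} sends a Laurent polynomial $\tilde f$ to $f$. The Newton polytope $\tilde\Delta\subset V^*$ of $\tilde f$ has vertices given by the coordinate vectors of the elements of $\Lambda$ in the basis $\lambda_1,\dots,\lambda_q$, and by construction $s_G^*(\tilde\Delta)=\Delta$ as polytopes in $(\R^n)^*$ (the dual map $s_G^*\colon V^*\to(\R^n)^*$ sends $\lambda_i\in\Z^q\subset V^*$ to $\lambda_i\in G\subset(\R^n)^*$).

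Next I would invoke the standard result (Kapranov's theorem, which should be recalled in subsection~\ref{tropVar}) that the tropical fan $\mathcal K$ of the hypersurface $M_G=\{\tilde f=0\}\subset\T_G$ is the codimension-one skeleton of the normal fan of $\tilde\Delta$, i.e.\ $\mathcal K=\mathcal K_{\tilde\Delta,1}\subset V$, with the weight of the cone $K_{\tilde\Lambda}$ dual to an edge $\tilde\Lambda$ equal to its lattice length $\vol_1(\tilde\Lambda)$.

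The core of the argument is then to compute $\mathcal L_G\cdot\mathcal K_{\tilde\Delta,1}$. Since $\mathcal L_G$ is the linear subspace $L_G=s_G(\R^n)$ carrying Euclidean weight~$1$, and $s_G\colon\R^n\to L_G$ is a linear isomorphism, the combinatorial identity to verify is that intersecting the normal fan of $\tilde\Delta$ with $L_G$ produces the normal fan of the projected polytope $s_G^*(\tilde\Delta)=\Delta$: a cone $K_{\tilde\Lambda}\cap L_G$ is nontrivial of top dimension precisely when $\tilde\Lambda$ projects onto an edge $\Lambda$ of $\Delta$, in which case it equals (the image under $s_G$ of) the dual cone $K_\Lambda$. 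For the weights, the lattice $\Z^q\subset V^*$ maps onto $G\subset(\R^n)^*$ under $s_G^*$, so lattice lengths computed in $\tilde\Delta$ pass to lattice lengths in $\Delta$, matching the prescription $w(K_\Lambda)=\vol_1(\Lambda)$ in $\mathcal K_{\Delta,1}$. Applying $s_G^*$ then simply transports this fan from $L_G$ to $\R^n$, giving $X^{\rm trop}=\mathcal K_{\Delta,1}$.

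The main obstacle is the third step: showing that the Euclidean product $\mathcal L_G\cdot\mathcal K$ from subsection~\ref{tropVar} really does compute the normal fan of the projected polytope with its lattice-length weights. This is where one must use the specific definitions of Euclidean weight and of the product as stable intersection; the combinatorics of faces reduces to the standard fact that normal fans behave well under linear projection, but the bookkeeping of weights requires checking that the surjection $s_G^*\colon\Z^q\twoheadrightarrow G$ preserves one-dimensional lattice volumes along the relevant edges.
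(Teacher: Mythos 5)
Your plan follows the paper's proof essentially verbatim: pass to the Laurent polynomial $F=\omega_G^*(f)$ with Newton polytope $\Gamma$, use that the tropicalization of the hypersurface $M_G$ is $\mathcal K_{\Gamma,1}$, note that $s'_G\Gamma=\Delta$, and pull back along $s_G$. The ``main obstacle'' you flag --- that the product with $\mathcal L_G$ followed by transport to $\R^n$ yields $\mathcal K_{s'_G\Gamma,1}$ with the correct edge weights --- is precisely Theorem~\ref{thmBack}(3) (property \hyperlink{[4]}{[{\bf 4}]}), which the paper simply cites (and proves later via Proposition~\ref{prKDeltaProduct2} and the tropical BKK theorem), so the combinatorial verification you sketch need not be redone.
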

\begin{proof}
Let $\Gamma$ be a Newton polyhedron of Laurent polynomial $F=\omega^*_G(f)$.
Then the tropicalization of algebraic variety $M_G=\{g\in\T_G\colon F(g)=0\}$
is a fan $\mathcal K_{\Gamma,1}\subset  V$ consisting of cones, dual to faces of $\Gamma$.
Note that $s'_G\Gamma=\Delta$,
where $s'_G\colon(\re\mathcal T_G)^*\to{\R^n}^*$ is a linear operator adjoint to $s_G$.
Now the statement follows from Theorem \ref{thmBack} (3);
see also \hyperlink{[4]}{[{\bf 4}]}.
\end{proof}
If $\codima X=n$,
then $\dim X^{\rm trop}=0$.
In this case, denote by $d_w(X)$ the weight of the zero cone in $X^{\rm trop}$.
\begin{definition} \label{dfWeakDensity}
We call $d_w(X)$ \emph{a weak density} of \EAS\ $X$.
\end{definition}
\begin{definition} \label{dfIndex}
Let $X_1,\ldots,X_k$ be \EAS s of
total algebraic codimension $n$.
The weight of a zero cone in a $0$-dimensional tropical fan
$X^{\rm trop}_1\cdot\ldots\cdot X^{\rm trop}_k $
is denoted by $I(X_1,\ldots,X_k)$ and called the intersection index
of \EAS s $X_1,\ldots,X_k$.
\end{definition}
The following statement is a quasialgebraic analogue of BKK theorem; see also \cite{K81, Few}.
\begin{theorem} \label{thmBKK}
Let $X_1\ldots,X_n$ be zero surfaces of quasialgebraic \EAS {\rm s} $f_1,\ldots,f_n$
with Newton polyhedra $\Delta_1,\ldots,\Delta_n$.
%$\Delta_1,\ldots,\Delta_n$.
Then
$$
I(X_1,\ldots,X_n)=\frac{n!}{(2\pi)^n}V_n(\Delta_1,\ldots,\Delta_n),
$$
where $V_n(\Delta_1,\ldots,\Delta_n)$ is a mixed volume of polyhedra.
\end{theorem}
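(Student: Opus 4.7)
The plan is to combine the preceding corollary with the classical BKK theorem in the auxiliary torus $\T_G$, and to trace the resulting normalization down to $\R^n$ through the pullback $s_G^*$. More precisely, the corollary immediately preceding the theorem identifies each tropical hypersurface as the Euclidean fan $X_i^{\rm trop}=\mathcal K_{\Delta_i,1}\subset\R^n$, so by Definition \ref{dfIndex} the intersection index $I(X_1,\ldots,X_n)$ equals the weight of the zero cone in the Euclidean product $\mathcal K_{\Delta_1,1}\cdot\ldots\cdot\mathcal K_{\Delta_n,1}$. The theorem thus reduces to a purely tropical identity in $\R^n$, which I shall establish by lifting to $\T_G$.

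To carry out the lift, I would choose $G$ generated by $\bigcup_i\supp f_i$, so that $f_i=\omega_G^*(F_i)$ for a Laurent polynomial $F_i$ on $\T_G$ with Newton polytope $\Gamma_i$ satisfying $s'_G(\Gamma_i)=\Delta_i$. By the multiplicativity of the stable product and of the pullback on fans transverse to $\ker s_G^*$ (from Section \ref{pullBack}), the product of the $X_i^{\rm trop}$ becomes $s_G^*\bigl(\mathcal L_G\cdot\mathcal K_{\Gamma_1,1}\cdots\mathcal K_{\Gamma_n,1}\bigr)$. Inside $\T_G$, equipped with its natural lattice of one-parameter subgroups, the zero-cone weight of $\mathcal L_G\cdot\prod_i\mathcal K_{\Gamma_i,1}$ is precisely what the tropical BKK theorem computes for the number of $L_G$-transverse common zeros of $F_1,\ldots,F_n$; under the adjoint projection $s'_G$ the polytopes $\Gamma_i$ map to $\Delta_i$, yielding the mixed volume $n!\,V_n(\Delta_1,\ldots,\Delta_n)$.

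The factor $(2\pi)^{-n}$ then records the Jacobian picked up by the pullback $s_G^*$ together with the Euclidean normalization $w(L_G)=1$: the character lattice of $\T_G$ intersected with $L_G$ corresponds under $s_G^{-1}$ to the period lattice $2\pi\Z^n\subset\R^n$ of the standard winding, so the transition from lattice-normalized to Euclidean volumes contributes exactly $(2\pi)^{-n}$. I expect this careful bookkeeping of the pullback and of the Euclidean weights to be the main technical obstacle; the rest of the argument is essentially formal given the machinery of Section \ref{pullBack}. A useful sanity check is the case of integer supports $\supp f_i\subset\Z^n$: each $f_i$ is then $2\pi i\Z^n$-periodic and descends to a Laurent polynomial on $(\C\setminus0)^n$, so the theorem reduces directly to the classical BKK count divided by the fundamental-domain volume $(2\pi)^n$, and the general real-support case follows by continuity of the mixed volume in $\Delta_i$.
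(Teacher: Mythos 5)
Your proposal follows essentially the same route as the paper: the paper's entire proof is the single sentence ``Follows from the tropical BKK theorem, see, for example, \cite{EKK20} [Theorem 3.1.3]'', i.e., precisely the reduction via $X_i^{\rm trop}=\mathcal K_{\Delta_i,1}$ and the tropical BKK theorem that you spell out. Your extra bookkeeping of the $(2\pi)^{-n}$ normalization (via the period lattice of the winding $\omega_G$ and the integer-support sanity check) goes beyond what the paper records and is the right thing to verify, since the cited tropical statement by itself yields $n!\,V_n(\Delta_1,\ldots,\Delta_n)$ in the lattice normalization.
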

\begin{proof}
Follows from the tropical BKK theorem,
see, for example,
\cite{EKK20} [Theorem 3.1.3].
\end{proof}
\section{Ring of conditions $\mathcal E_G$}
\label{EG}
Let $\mathcal V(E)$ and $\mathcal V(V)$ be the rings of Euclidean tropical varieties
in $E$ and $V$,
$s\colon E\to V$ be a linear operator.
In Subsection \ref{pullBacks}, the mapping of an inverse image
$s^*\colon\mathcal V(V)\to\mathcal V(E)$ is defined.
Below we use the following properties of this mapping:
\par\smallskip
\noindent
\hypertarget{[1]}{[{\bf 1}]}
$s^*$ is a homomorphism of $\R$-algebras

\noindent
\hypertarget{[2]}{[{\bf 2}]}
$s^*$ preserves codimensions of varieties

\noindent
\hypertarget{[3]}{[{\bf 3}]}
For any linear operator $u\colon Z\to E$ we have $(s\cdot u)^*=u^*\cdot s^*$

\noindent
\hypertarget{[4]}{[{\bf 4}]}
Let $s'\colon V^*\to E^*$ be an adjoint to $s$ linear operator
and $\Delta\subset V^*$ be a convex polyhedron.
Then for any $k\leq\dim V$ we have
 $s^*\mathcal K_{\Delta,k}=\mathcal K_{s'\Delta,k}$.

\noindent
\hypertarget{[5]}{[{\bf 5}]}
Let $\rho\colon\T\to\mathbb H$ be a homomorphism of tori
and let $M\subset\mathbb H$ be an algebraic variety.
Denote by $Y_{\rm tr}$ the tropical fan of algebraic variety $Y$.
Then $(\rho^{-1}M)_{\rm tr}=s^*M_{\rm tr}$,
where $s=d\rho\colon\re\mathcal T\to\re\mathcal H$.
\par\smallskip
\begin{lemma} \label{lmTropicalizations1}
Tropicalizations of \EAS {\rm s} in the ring $\mathcal V(\R^n)$
forms a semigroup in addition and a semigroup in multiplication.
\end{lemma}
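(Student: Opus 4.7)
The plan is to realize the sum $X_1^{\rm trop} + X_2^{\rm trop}$ as the tropicalization of the union $X_1 \cup X_2$ and the product $X_1^{\rm trop} \cdot X_2^{\rm trop}$ as the tropicalization of a generic intersection $X_1 \cap (X_2 + t)$.

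For the additive part, I would choose a finitely generated subgroup $G \subset \re(\C^n)^*$ containing the supports of the defining equations of both $X_1$ and $X_2$; by the convention stated after Definition~\ref{dfAlg1} I may assume the two \EAS\ are equidimensional of a common algebraic codimension. The union $X_1 \cup X_2$ has defining ideal $I_1 \cap I_2 \subset E_G$, and its model is $M_{G,1} \cup M_{G,2} \subset \T_G$, whose tropicalization is $\mathcal K_1 + \mathcal K_2$. Since multiplication by $\mathcal L_G$ and the pullback $s_G^*$ are both $\R$-linear (the latter by property \hyperlink{[1]}{[{\bf 1}]}), this yields
\[
(X_1 \cup X_2)^{\rm trop} = s_G^*(\mathcal L_G \cdot (\mathcal K_1 + \mathcal K_2)) = X_1^{\rm trop} + X_2^{\rm trop}.
\]

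For the multiplicative part, I would exploit that any \EAS\ is invariant under translation: shifting $X_2$ by $t \in \C^n$ only rescales each coefficient $c_\lambda$ of its defining equations by $\E^{-\langle t,\lambda\rangle}$, so the support group is unchanged, and this translation corresponds at the level of models to left multiplication by $\omega_G(t) \in \T_G$. By Corollary~\ref{corWinding1} the winding is Zariski-dense, so for generic $t$ the intersection $M_{G,1} \cap \omega_G(t)\,M_{G,2}$ is proper, with tropicalization equal to the stable intersection $\mathcal K_1 \cdot \mathcal K_2$. Hence the \EAS\ $Z := X_1 \cap (X_2 + t)$ satisfies
\[
Z^{\rm trop} = s_G^*(\mathcal L_G \cdot \mathcal K_1 \cdot \mathcal K_2),
\]
while property \hyperlink{[1]}{[{\bf 1}]} gives
\[
X_1^{\rm trop} \cdot X_2^{\rm trop} = s_G^*(\mathcal L_G \mathcal K_1) \cdot s_G^*(\mathcal L_G \mathcal K_2) = s_G^*(\mathcal L_G^2 \cdot \mathcal K_1 \cdot \mathcal K_2).
\]

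Matching the two expressions reduces the lemma to the identity $s_G^*(\mathcal L_G^2 \cdot \mathcal A) = s_G^*(\mathcal L_G \cdot \mathcal A)$ for every $\mathcal A \in \mathcal V(V)$. This is where I expect the main technical difficulty: the statement is that $\mathcal L_G$ acts idempotently modulo $\ker s_G^*$, reflecting the fact that after one intersection with the linear subspace $L_G = s_G(\R^n)$ (equipped with Euclidean weight $1$), further self-intersection is absorbed by the pullback. I would prove it by reducing to the basis of fans $\mathcal K_{\Delta,k}$ via property \hyperlink{[4]}{[{\bf 4}]} and extending $\R$-linearly; on a single dual fan the identity becomes a transparent computation involving the adjoint $s_G'\colon V^* \to (\R^n)^*$.
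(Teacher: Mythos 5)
Your overall strategy --- realizing the sum as the tropicalization of a union and the product as the tropicalization of a generically translated intersection --- is the same as the paper's, and the additive half is essentially correct (modulo the small point that one should take a generic toric shift $gM_1\cup M_2$ to avoid common components, which is why the paper states the closure property for $gP\cup Q$ rather than $P\cup Q$).

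The multiplicative half, however, has a genuine gap. You write $X_i^{\rm trop}=s_G^*(\mathcal L_G\cdot\mathcal K_i)$, apply the ring-homomorphism property \hyperlink{[1]}{[{\bf 1}]} to obtain $X_1^{\rm trop}\cdot X_2^{\rm trop}=s_G^*(\mathcal L_G^2\cdot\mathcal K_1\cdot\mathcal K_2)$, and then reduce the lemma to the identity $s_G^*(\mathcal L_G^2\cdot\mathcal A)=s_G^*(\mathcal L_G\cdot\mathcal A)$, which you do not prove. That identity is false: $\mathcal L_G$ is the single cone equal to the proper subspace $L_G=s_G(\R^n)\subset\re\mathcal T_G$ (proper as soon as $\rank G>n$), and the stable self-intersection of a proper linear subspace is the zero tropical cycle --- in the weight-as-form convention of Definition \ref{dfWeightOld} the product weight is $W(K_1)\wedge W(K_2)$, which vanishes unless $V_{K_1}+V_{K_2}$ is the whole ambient space, and $L_G+L_G=L_G\ne\re\mathcal T_G$. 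Hence $\mathcal L_G^2=0$ and your right-hand side collapses to $0$, so the reduction cannot be completed. The source of the trouble is feeding the factor $\mathcal L_G$ of Definition \ref{dfTropicalizationG} into the homomorphism a second time: for the injective operator $s_G$ the pullback of Definition \ref{dfBackInj} already consists of intersecting with $L_G$ and transporting to $\R^n$, so $s_G^*(\mathcal L_G\cdot\mathcal K)$ is nothing but $s_G^*\mathcal K$. With that reading, property \hyperlink{[1]}{[{\bf 1}]} gives directly $X_1^{\rm trop}\cdot X_2^{\rm trop}=s_G^*(\mathcal K_1\cdot\mathcal K_2)$, and $\mathcal K_1\cdot\mathcal K_2$ is the tropicalization of $gM_1\cap M_2$ for generic $g\in\T_G$, whose preimage under $\omega_G$ is again an \EAS. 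That is the paper's one-line argument, and it closes the gap without any idempotency statement for $\mathcal L_G$.
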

\begin{proof}
The set of \EAS\ tropicalizations is a set of tropical fans of the form $\{s_G^*\mathcal K\}$,
where $\mathcal K$ is a tropicalization of some algebraic variety.
Therefore, the statement follows from the well-known property of algebraic tropicalizations:
\emph{if $\mathcal P, \mathcal Q$ are tropicalizations of algebraic varieties $P, Q$,
then for a general $g\in\T_G$ the tropical varieties
$\mathcal P + \mathcal Q$ and $\mathcal P\cdot\mathcal Q$
are tropicalizations of respectively $gP\cup Q$ and $gP\cap Q$}.
\end{proof}
\begin{theorem} \label{thmTropicalizations}
Integer linear combinations of \EAS\ tropicalizations
form a subring $\mathcal E_G$ of the ring $\mathcal V(\R^n)$.
The $\Q$-algebra $\mathcal E_G\otimes\Q$ is generated by tropical fans of the form $\mathcal K_{\Delta,1}$,
where $\Delta\subset{\R^n}^*$ is a convex polyhedron with vertices at points of the group $G$.
\end{theorem}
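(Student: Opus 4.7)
\textbf{Subring.} Lemma~\ref{lmTropicalizations1} already asserts that the set of \EAS{} tropicalizations is closed under both sum and (stable tropical) product in $\mathcal V(\R^n)$. Hence the $\Z$-span of these tropicalizations is closed under addition, subtraction, and multiplication, giving a subring $\mathcal E_G\subset\mathcal V(\R^n)$.

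\textbf{Generation.} The easy direction: for a convex polytope $\Delta\subset(\R^n)^*$ with vertices in $G$, choose any $f\in E_G$ whose Newton polytope is $\Delta$; the preceding corollary identifies $\mathcal K_{\Delta,1}$ with the tropicalization of the \EAS{} hypersurface $\{f=0\}$, so $\mathcal K_{\Delta,1}\in\mathcal E_G$. For the converse, write an arbitrary \EAS{} tropicalization as $X^{\rm trop}=s_G^*(\mathcal L_G\cdot\mathcal K)$, where $\mathcal K$ is the tropicalization of the model $M_G\subset\T_G$. Here I would invoke the classical generation result (Fulton--Sturmfels in Minkowski-weight form, equivalent to the De~Concini--Procesi theorem that the ring of conditions of a torus is generated by hypersurface classes): the $\Q$-subring of $\mathcal V(V)\otimes\Q$ generated by Newton-polytope fans $\mathcal K_{\Gamma,1}$, with $\Gamma\subset V^*$ having integer vertices, contains the tropicalization of every algebraic subvariety of $\T_G$. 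Writing $\mathcal K=\sum_i n_i\prod_j\mathcal K_{\Gamma_{ij},1}$ rationally, applying Lemma~\ref{lmTropicalizations1} iteratively to distribute $s_G^*$ and the factor $\mathcal L_G$ across the product, and then using property [{\bf 4}] to identify each factor $s_G^*(\mathcal L_G\cdot\mathcal K_{\Gamma_{ij},1})=\mathcal K_{s'_G\Gamma_{ij},1}$ (a hypersurface tropicalization whose polytope has vertices in $G$, since $s'_G$ sends the character lattice $\Z^q\subset V^*$ into $G$), yields
$$X^{\rm trop}=\sum_i n_i\prod_j\mathcal K_{s'_G\Gamma_{ij},1},$$
the required $\Q$-linear combination of products of $\mathcal K_{\Delta,1}$'s with vertices in $G$.

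\textbf{Main obstacle.} The decisive input is the classical generation result for the tropical/conditions ring of a torus, which supplies the decomposition $\mathcal K=\sum_i n_i\prod_j\mathcal K_{\Gamma_{ij},1}$; without it there is no reason for $\mathcal K$ to be rationally expressible through hypersurface fans. The secondary issue---distributing the single factor $\mathcal L_G$ and the pullback $s_G^*$ across an iterated product of $\mathcal K_{\Gamma_{ij},1}$'s---reduces to the compatibility of stable tropical intersection with the ring structure of $\mathcal V(\R^n)$ already established in Lemma~\ref{lmTropicalizations1}, together with the formal properties [{\bf 1}]--[{\bf 4}] of the pullback recalled at the beginning of Section~\ref{EG}.
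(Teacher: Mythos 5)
Your proposal is correct and follows essentially the same route as the paper: the subring property comes from Lemma~\ref{lmTropicalizations1} (equivalently, from the fact that algebraic tropicalizations form a subring of $\mathcal V(\re\mathcal T_G)$ and $s_G^*$ is a ring homomorphism, property [{\bf 1}]), and the generation statement is obtained by pushing the classical result of \cite{EKK20} --- that $\mathcal R\otimes\Q$ is generated by the fans $\mathcal K_{\Lambda,1}$ with integer vertices --- through $s_G^*$ via property [{\bf 4}], noting that $s'_G$ carries the character lattice into $G$. Your worry about ``distributing $\mathcal L_G$ across the product'' is exactly what property [{\bf 1}] (equivalently Theorem~\ref{thmBack}(1)) packages, since for the injective $s_G$ the factor $\mathcal L_G$ is built into the definition of $s_G^*$; the paper invokes it in the same way.
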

\begin{proof}
Integer linear combinations of tropicalizations of algebraic varieties
form a subring $\mathcal R$ of the ring $\mathcal V(\re\mathcal T_G)$.
In addition,  the $\Q$-algebra $\mathcal R\otimes\Q$
is generated by tropical fans of the form $\mathcal K_{\Lambda,1}$,
where $\Lambda\subset(\re\mathcal T_G)^* $ is a convex polyhedron with vertices at integer points;
see \cite{EKK20}.
The linear operator $s'_G\colon(\re\mathcal T_G)^*\to{\R^n}^*$,
adjoint to $s_G$,
translates integer points of $(\re\mathcal T_G)^*$ into points of the group $G\subset{\R^n}^*$.
Therefore, the statement follows from
the given above properties \hyperlink{[1]}{[{\bf 1}]} and \hyperlink{[4]}{[{\bf 4}]}
of the mapping $s^*_G$.
\end{proof}
%
%Next, we need the definition of the ring of convex polyhedra.
Let $G$ be an additive subgroup in a real $N$-dimensional vector space $E$.
Now we define \emph{a ring of convex polyhedra with vertices at the points of the group} $G$.
First consider the space $H$ of virtual convex polyhedra with vertices at points of $G$
(recall that a virtual polyhedron is the formal difference of two convex polyhedra).
Let $\mathcal S(H)=\sum_{m\geq0}\mathcal S_m(H)$ be a symmetric algebra of $H$.
For $\mathcal S_N(H)\ni s=\Delta_1\cdot\ldots\cdot\Delta_N$
we set $I(s)$ equal to the mixed volume of
%convex polyhedra
$\Delta_1,\ldots,\Delta_N$.
We associate with the linear functional $I\colon\mathcal S_N(H)\to\R$
the homogeneous ideal $J\subset\mathcal S(H)$
generated by the following sets of generators:
1) $\ker I$, 2) $\sum_{m>N}\mathcal S_m(H)$ and
3) $\{s\in\mathcal S_k(H)\:\vert\:s\cdot\mathcal S_{N-k}(H)\subset\ker I, k=1,\ldots,N-1\}$.
Note that the ideal $J$ does not depend on the choice of Lebesgue measure in the space $E$.
\begin{definition} \label{dfPolRing}
The ring ${\rm Pol}(E;G)=\mathcal S(H)/J$  is called the ring of convex polyhedra
with vertices in $G$.
\end{definition}
\begin{theorem}\label{thmqIsoPol}
The mapping $\Delta\mapsto\mathcal K_{\Delta,1}$
extends to the ring isomorphisms

{\rm(i)}  ${\rm Pol}(E;E)\to\mathcal V(E^*)$ for $G=E$

{\rm(ii)} ${\rm Pol}(E;\Z^N)\otimes\Q\to\mathcal \Q(E^*)$, where  $\mathcal \Q(E^*)$
is a ring of rational tropical varieties (see Definition {\rm\ref{dfEuQ}})

{\rm(iii)}  ${\rm Pol}({\R^n}^*;G)\otimes\Q\to\mathcal E_G$
\end{theorem}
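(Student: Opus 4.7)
The plan is to construct the map $\phi$ uniformly for all three cases and to show $\ker\phi=J$; parts (i) and (ii) will be repackagings of the known description of the ring of algebraic tropical varieties (cf.\ \cite{EKK20} and, underneath, McMullen's polytope algebra), while (iii) will reduce to (ii) by pullback along $s_G$. First I would extend $\Delta\mapsto\mathcal K_{\Delta,1}$ linearly from $H$ to virtual polyhedra; this is well-defined because the fan of dual cones of a Minkowski sum is the common refinement of the individual dual-cone fans, so $\Delta\mapsto\mathcal K_{\Delta,1}$ is Minkowski-additive. Extending multiplicatively yields a graded $\R$-algebra homomorphism $\phi\colon\mathcal S(H)\to\mathcal V({\R^n}^*)$ (respectively into $\mathcal V(E^*)$ or $\mathcal Q(E^*)$). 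Surjectivity of $\phi_\Q$ onto $\mathcal E_G\otimes\Q$ is exactly Theorem \ref{thmTropicalizations}; for (i) and (ii) the generation statement is classical.

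To check $J\subseteq\ker\phi_\Q$, I would handle the three families of generators of $J$ separately. For $\sum_{m>N}\mathcal S_m(H)$, any product of more than $N$ codimension-$1$ tropical fans in an $N$-dimensional ambient space vanishes by dimension count. For $\ker I\subset\mathcal S_N(H)$, Theorem \ref{thmBKK} identifies $\phi_\Q$ in top degree with $\tfrac{N!}{(2\pi)^N}$ times the mixed-volume functional $I$, so $\ker I$ maps to zero. For the third family, if $s\cdot\mathcal S_{N-k}(H)\subseteq\ker I$, then $\phi_\Q(s)$ has trivial intersection index with every product of $N-k$ hypersurface tropicalizations, hence (by Theorem \ref{thmTropicalizations}, since these products span the codim-$k$ part of $\mathcal E_G\otimes\Q$) with every class in complementary codimension; provided the intersection pairing on $\mathcal E_G\otimes\Q$ is nondegenerate, this forces $\phi_\Q(s)=0$.

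Injectivity therefore reduces to establishing nondegeneracy of the intersection pairing. For case (ii) this is classical for the polytope algebra / ring of conditions of an algebraic torus, and (i) follows by the same argument over $\R$. For (iii) I would deduce it from (ii) using $s_G^*$: by properties \hyperlink{[1]}{[{\bf 1}]} and \hyperlink{[4]}{[{\bf 4}]}, $s_G^*$ is an algebra homomorphism carrying the hypersurface generators $\mathcal K_{\Lambda,1}$ of $\mathcal R\otimes\Q$ to the hypersurface generators $\mathcal K_{s_G'\Lambda,1}$ of $\mathcal E_G\otimes\Q$, while dually $s_G'$ surjects integer polytopes in $(\re\mathcal T_G)^*$ onto polytopes with vertices in $G\subset{\R^n}^*$, evidently respecting the defining ideals $J$. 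Assembling the resulting commutative square with (ii) on top and (iii) on the bottom, surjectivity of the vertical maps together with the top isomorphism forces the bottom isomorphism and transfers nondegeneracy of the pairing.

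The main obstacle I expect is verifying that multiplication by $\mathcal L_G$ (implicit in Definition \ref{dfTropicalizationG}) and the pullback $s_G^*$ interact correctly at top degree, so that $I(X_1,\ldots,X_n)$ of Definition \ref{dfIndex} really is, up to the Euclidean weight of $L_G$, the algebraic intersection index of the models $M_i$ in $\T_G$. Once this compatibility is in hand, nondegeneracy of the algebraic pairing on $\mathcal R\otimes\Q$ descends to $\mathcal E_G\otimes\Q$, closing the injectivity argument and yielding all three isomorphisms.
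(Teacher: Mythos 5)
Your handling of (i)--(ii) (citing the classical description of the polytope/tropical algebra) and your identification of the kernel of $\Delta\mapsto\mathcal K_{\Delta,1}$ with the annihilator ideal $J$ of the top-degree mixed-volume evaluation is sound, and is essentially what the paper delegates to \cite{EKK20}. The genuine gap is in your reduction of (iii) to (ii). You build a commutative square with ${\rm Pol}(s_G')$ and $s_G^*$ as surjective verticals and assert that ``surjectivity of the vertical maps together with the top isomorphism forces the bottom isomorphism and transfers nondegeneracy of the pairing.'' Neither claim holds. Formally, a top isomorphism plus surjective verticals yields only surjectivity of the bottom arrow; injectivity would require that the kernel of the right vertical be contained in the image of the kernel of the left one, which is precisely the point at issue. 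More substantively, the two intersection pairings are incompatible under $s_G^*$: on $\mathcal R\otimes\Q$ one pairs complementary codimensions summing to $N=\dim\re\mathcal T_G$, while on $\mathcal E_G\otimes\Q$ they sum to $n<N$, and $s_G^*$ kills every class of codimension exceeding $n$ (property \hyperlink{[2]}{[{\bf 2}]}). So the top-degree evaluation upstairs does not map to the one downstairs, and nondegeneracy cannot be ``transferred'' along $s_G^*$. Your closing remark about matching $I(X_1,\ldots,X_n)$ with ``the algebraic intersection index of the models $M_i$ in $\T_G$'' runs into the same problem: models of codimensions summing to $n$ meet in an $(N-n)$-dimensional set in $\T_G$, so there is no zero-dimensional index there to compare with.

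The repair is to argue entirely inside $\R^n$, which is what the paper implicitly does. Surjectivity onto $\mathcal E_G\otimes\Q$ is Theorem \ref{thmTropicalizations}. For $\ker\phi\subseteq J$, Theorem \ref{thmBKK} identifies the degree-$n$ evaluation of $\mathcal K_{\Delta_1,1}\cdots\mathcal K_{\Delta_n,1}$ with a fixed nonzero multiple of $V_n(\Delta_1,\dots,\Delta_n)$, so $\phi(s)=0$ gives $I(s\cdot t)=0$ for all $t\in\mathcal S_{n-k}(H)$, i.e.\ $s\in J$. For the converse inclusion, the third family of generators of $J$ maps to zero because an element of $\mathcal V(\R^n)$ that pairs trivially with all products of $G$-polytope hypersurface classes pairs trivially with everything (positive rational dilates of $G$-polytopes are dense among all polytopes, the pairing is multilinear and continuous, and by Theorem \ref{thmSpan} the classes $\mathcal K_{\Delta,1}$ generate), hence vanishes by Theorem \ref{thmPair}. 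Equivalently, one may embed ${\rm Pol}({\R^n}^*;G)\otimes\Q$ into ${\rm Pol}({\R^n}^*;{\R^n}^*)$ and invoke (i). Either route replaces your square; as written, your argument for (iii) does not close.
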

\begin{proof}
The statements (i), (ii) see in \cite{EKK20}.
The statement (iii) follows from Theorem \ref{thmTropicalizations}.
\end{proof}
\begin{corollary} \label{corMainG2}
Let $X,Y$ be equidimensional \EAS {\rm s}, $\codima X=\codima Y=k$.
Then the following conditions are equivalent

{\rm(1)} $X^{\rm trop}=Y^{\rm trop}$

{\rm(2)} for any \EAS\ $\:Z$ of algebraic codimension $n-k$,
$I(X,Z)=I(Y,Z)$
\end{corollary}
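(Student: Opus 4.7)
The plan is to derive both implications from Theorem~\ref{thmqIsoPol}(iii) together with the non-degeneracy of the mixed-volume pairing that is built into the ideal $J$ of Definition~\ref{dfPolRing}.

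The implication (1)$\Rightarrow$(2) is immediate: if $X^{\rm trop}=Y^{\rm trop}$ in $\mathcal V(\R^n)$, then multiplying both sides by $Z^{\rm trop}$ and reading off the weight of the zero cone in the resulting $0$-dimensional fan (Definition~\ref{dfIndex}) yields $I(X,Z)=I(Y,Z)$ for every \EAS\ $Z$ of algebraic codimension $n-k$.

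For (2)$\Rightarrow$(1), set $\xi=X^{\rm trop}-Y^{\rm trop}\in\mathcal E_G$. By property~\hyperlink{[2]}{[{\bf 2}]} of $s^*$, $\xi$ is homogeneous of codimension $k$ in the graded ring $\mathcal E_G$. I would next establish the spanning statement that the codimension $n-k$ part of $\mathcal E_G\otimes\Q$ coincides with the $\Q$-span of tropicalizations $Z^{\rm trop}$ of equidimensional \EAS s of algebraic codimension $n-k$. By Theorem~\ref{thmTropicalizations}, $\mathcal E_G\otimes\Q$ is generated as a graded $\Q$-algebra by hypersurface tropicalizations $\mathcal K_{\Delta,1}$, so its codimension $n-k$ piece is $\Q$-spanned by products $\mathcal K_{\Delta_1,1}\cdot\ldots\cdot\mathcal K_{\Delta_{n-k},1}$; by the multiplicative half of Lemma~\ref{lmTropicalizations1} each such product is the tropicalization of a generic transverse intersection of the corresponding hypersurfaces, hence of an equidimensional \EAS\ of algebraic codimension $n-k$ (or it vanishes, which is trivial). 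Combining this with hypothesis~(2) and $\Q$-linearity gives that $\xi\cdot\alpha$ has weight zero at the origin for \emph{every} codimension $n-k$ element $\alpha\in\mathcal E_G\otimes\Q$.

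Transporting through Theorem~\ref{thmqIsoPol}(iii) to ${\rm Pol}({\R^n}^*;G)\otimes\Q$, multiplication of tropical fans corresponds to multiplication in the symmetric algebra $\mathcal S(H)$, and, up to the positive scalar $n!/(2\pi)^n$ coming from Theorem~\ref{thmBKK}, the weight of the zero cone of a top-dimensional product corresponds to the mixed-volume functional $I$ of Definition~\ref{dfPolRing}. Consequently the image of $\xi$ in $\mathcal S_k(H)/J_k$ multiplies all of $\mathcal S_{n-k}(H)$ into $\ker I$, which is exactly the third family of generators defining $J$; hence the image of $\xi$ lies in $J$, so $\xi=0$ in ${\rm Pol}({\R^n}^*;G)\otimes\Q$ and therefore in $\mathcal E_G\otimes\Q$. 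Since integer-weighted tropical fans form a torsion-free abelian group, this yields $X^{\rm trop}=Y^{\rm trop}$, as required. The main obstacle I expect is the spanning step in the middle paragraph: Lemma~\ref{lmTropicalizations1} only gives $\mathcal P\cdot\mathcal Q$ as the tropicalization of $gP\cap Q$ for a \emph{generic} translate $g$, so realizing a product of $n-k$ hypersurface tropicalizations as a single \EAS\ tropicalization requires iterating the generic translation argument while checking that equidimensionality of the expected codimension is preserved at each stage.
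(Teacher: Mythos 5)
Your proposal is correct and follows essentially the same route as the paper: (1)$\Rightarrow$(2) directly from Definition~\ref{dfIndex}, and (2)$\Rightarrow$(1) by showing $(X^{\rm trop}-Y^{\rm trop})$ pairs to zero against the whole codimension $n-k$ part of $\mathcal E_G$ and then invoking the non-degeneracy of the pairing in ${\rm Pol}({\R^n}^*;G)$ -- which you unwind, correctly, via the third family of generators of the ideal $J$. Your explicit spanning step and the torsion-freeness remark only fill in details the paper leaves implicit.
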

\begin{proof}
It follows from Definition \ref{dfIndex} that (1)$\Rightarrow$(2).
On the other hand,
it follows from (2)
that $$\forall \mathcal K\in\mathcal E_G\colon\:(X^{\rm trop}-Y^{\rm trop})\cdot\mathcal K=0.$$
Now the statement follows from the nondegeneracy of pairing in the ring ${\rm Pol}({\R^n}^*;G)$.
This non-degeneracy follows from the non-degeneracy of pairing for tropical varieties (see Theorem \ref{thmPair})
and from the standard properties of the mixed volume of polyhedra.
\end{proof}
\section{Ring of conditions $\mathcal E^{\rm quasi}$}
\label{Equasi}
In this Section we prove that
1) the notion of tropicalization  is independent
from the choice of the ring $E_G$,
containing the equations \EAS,
and 2) any tropical variety in $\R^n$ is a tropicalization of some \EAS.
Thus, the ring $\mathcal E^{\rm quasi}$, formed by tropicalizations of \EAS s,
coincides with the ring of tropical varieties in $\R^n$.
\begin{proposition}\label{prD1}
Let $G\subset H\subset{\R^n}^*$, $\pi_{H, G}\colon\T_H\to\T_G$ -
the character restriction from $H$ to $G$.
Then the mappings $s^*_G,\,s^*_ H\cdot(d\pi_{H, G})^*\colon\mathcal V(\re\mathcal T_G)\to\mathcal V(\R^n)$
coincide.
\end{proposition}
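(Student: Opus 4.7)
The plan is to reduce the equality of the two pullback operators on the ring $\mathcal V(\re\mathcal T_G)$ to the corresponding equality of the underlying linear maps $\R^n\to\re\mathcal T_G$, and then to invoke the functoriality property \hyperlink{[3]}{[{\bf 3}]} of the pullback listed in Section \ref{EG}.

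First I would establish the geometric compatibility of the two windings, namely the identity
$$\omega_G=\pi_{H,G}\circ\omega_H.$$
This follows directly from the invariant description of the standard winding given in Corollary \ref{corCharTorus}: for any $z\in\C^n$ and any $g\in G\subset H$ we have
$$\pi_{H,G}(\omega_H(z))(g)=\omega_H(z)(g)=\E^{\langle z,g\rangle}=\omega_G(z)(g),$$
since $\pi_{H,G}$ is by definition the restriction of a character of $H$ to the subgroup $G$.

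Next I would differentiate this identity at the origin and restrict to the real subspace $\R^n\subset\C^n$. Because $d\pi_{H,G}\colon\re\mathcal T_H\to\re\mathcal T_G$ is a linear map of real vector spaces, the chain rule gives
$$s_G=d\pi_{H,G}\circ s_H$$
as linear operators $\R^n\to\re\mathcal T_G$.

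Finally I would apply property \hyperlink{[3]}{[{\bf 3}]} of the pullback mapping with $s=d\pi_{H,G}$ and $u=s_H$ to conclude
$$s_G^*=(d\pi_{H,G}\cdot s_H)^*=s_H^*\cdot(d\pi_{H,G})^*,$$
which is exactly the asserted equality of operators $\mathcal V(\re\mathcal T_G)\to\mathcal V(\R^n)$. There is no real obstacle here: the whole proposition is essentially a bookkeeping lemma translating the intrinsic description of $\omega_G$ via characters into a statement about tropical pullbacks. The only care required is to make sure the identification of $\pi_{H,G}$ with character restriction is consistent with the chosen bases used to define the coordinate form of $\omega_G$ and $\omega_H$, which is guaranteed by Corollary \ref{corCharTorus}.
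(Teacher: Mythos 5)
Your proof is correct and follows essentially the same route as the paper: the paper likewise notes the commutativity $s_G=d\pi_{H,G}\circ s_H$ (asserted there as holding ``by definition'') and then dualizes via property \hyperlink{[3]}{[{\bf 3}]}. Your extra verification of $\omega_G=\pi_{H,G}\circ\omega_H$ through the character description is a welcome spelling-out of the step the paper leaves implicit.
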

\begin{proof}
By definition $s_G=s_H\cdot d\pi_{H,G}$,
i.e.
the diagram
\begin{equation}\label{eqD0}
% \[
 \begin{tikzcd}
&&\re\mathcal T_H\arrow{dd}{d\pi_{H,G}}
\\\R^n\arrow{rru}{s_H}\arrow{rrd}{s_G}\\
&&\re\mathcal T_G
\end{tikzcd}
\end{equation}
%\]
%
is comutative.
It follows from \hyperlink{[3]}{[{\bf 3}]} that the dual diagram
 \begin{equation}\label{eqD1}
 \begin{tikzcd}
&&\mathcal V(\re\mathcal T_H)\arrow{lld}{s^*_H}
\\\mathcal V(\R^n)\\
&&\mathcal V(\re\mathcal T_G)\arrow{uu}{(d\pi_{H,G})^*}\arrow{llu}{s^*_G}
\end{tikzcd}
\end{equation}
is also comutative.
\end{proof}
\begin{corollary} \label{corD1}
The tropicalization $X^{\rm trop}$ does not depend on the choice of the group $G$,
containing the equations of \EAS\ X.
\end{corollary}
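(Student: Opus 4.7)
The plan is to reduce the statement to comparing tropicalizations for an inclusion of admissible groups, and then to chase the commutative diagram supplied by Proposition \ref{prD1}. Given two finitely generated groups $G_1, G_2 \subset \re{\C^n}^*$ each containing a basis of ${\C^n}^*$ and the support of the equations of $X$, the sum $H = G_1 + G_2$ has the same properties. Consequently it suffices to establish the equality of tropicalizations computed via $G$ and via $H$ whenever $G \subset H$ are two such groups; equality for $G_1$ versus $G_2$ then follows by going through $H$.

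Fix the inclusion $G \subset H$ and write $\pi = \pi_{H,G}$. The ideal $I \subset E_G$ of $X$ corresponds via $\omega_G^*$ to an ideal of $\C[\T_G]$ defining $M_G$. Because $\omega_G^* = \omega_H^* \circ \pi^*$, the same equations regarded now in $E_H$ cut out the preimage $M_H = \pi^{-1}(M_G)$, so property \hyperlink{[5]}{[{\bf 5}]} yields $\mathcal K_H = (d\pi)^* \mathcal K_G$. Combining this with Proposition \ref{prD1} and the ring-homomorphism and functoriality properties \hyperlink{[1]}{[{\bf 1}]} and \hyperlink{[3]}{[{\bf 3}]}, one computes
\begin{equation*}
s_G^*(\mathcal L_G \cdot \mathcal K_G) \;=\; s_H^*\bigl((d\pi)^*(\mathcal L_G \cdot \mathcal K_G)\bigr) \;=\; s_H^*\bigl((d\pi)^* \mathcal L_G \cdot \mathcal K_H\bigr),
\end{equation*}
so that the corollary reduces to the identity $s_H^*\bigl((d\pi)^* \mathcal L_G \cdot \mathcal K_H\bigr) = s_H^*(\mathcal L_H \cdot \mathcal K_H)$.

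This last identity is the main obstacle I anticipate. Since $s_G = d\pi \circ s_H$ and both $s_G, s_H$ are injective on $\R^n$, the differential $d\pi$ restricts to an isomorphism of the $n$-dimensional subspaces $L_H \to L_G$; however, the full preimage $(d\pi)^{-1}(L_G)$ strictly contains $L_H$, its dimension exceeding that of $L_H$ by $\rank H - \rank G$. Thus $(d\pi)^* \mathcal L_G$ is not equal to $\mathcal L_H$ in $\mathcal V(\re\mathcal T_H)$. I would complete the argument via a projection-formula statement for Euclidean tropical varieties: the extra directions in $(d\pi)^{-1}(L_G)$ lie in fibers of $d\pi$ transverse to $L_H$, and multiplication by $\mathcal K_H = (d\pi)^* \mathcal K_G$ followed by pullback by $s_H$ (which is an isomorphism $\R^n \to L_H$) collapses these transverse directions with the correct Euclidean weights, returning the same fan in $\R^n$ as the intrinsic computation $s_H^*(\mathcal L_H \cdot \mathcal K_H)$. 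Alternatively, invoking the ring isomorphism of Theorem \ref{thmqIsoPol}, the identity can be translated into a statement about mixed volumes of polyhedra with vertices in $G$ versus in $H$, where invariance under the inclusion $G \subset H$ follows from the lattice-independence of the mixed volume.
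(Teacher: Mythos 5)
Your reduction to nested groups via $H=G_1+G_2$, the identification $M_H=\pi^{-1}(M_G)$ with the consequent $\mathcal K_H=(d\pi)^*\mathcal K_G$ from property \hyperlink{[5]}{[{\bf 5}]}, and the appeal to Proposition \ref{prD1} are all correct and match (indeed, usefully flesh out) the paper's intended argument. But the step you yourself flag as the main obstacle is a genuine gap: neither the ``projection formula'' nor the mixed-volume alternative is carried out, and the identity $s_H^*\bigl((d\pi)^*\mathcal L_G\cdot\mathcal K_H\bigr)=s_H^*(\mathcal L_H\cdot\mathcal K_H)$ cannot hold if $s_H^*$ is read as the pull-back operator of Subsection \ref{pullBacks} applied to a product that already carries an $\mathcal L$-factor: since $(d\pi)^*\mathcal L_G$ has codimension $\rank G-n$ in $\re\mathcal T_H$, the two sides would then differ in dimension by $\rank G-n$ (and a literal stable intersection of a fan supported in $L_H$ with $L_H$ itself is zero). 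So the computation stalls not for lack of a lemma but because of the reading of the notation.

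The resolution, which is what the one-line proof ``follows from (\ref{eqD1})'' silently uses, is that $s_G$ is injective with image $L_G$, so by Definition \ref{dfBackInj} the operator $s_G^*$ \emph{already is} ``multiply by $\mathcal L_G$ and transport to $\R^n$'': the expression $s_G^*(\mathcal L_G\cdot\mathcal K)$ in Definition \ref{dfTropicalizationG} is to be read as $(\mathcal L_G\cdot\mathcal K)_{s_G}=s_G^*\mathcal K$, not as the operator $s_G^*$ applied to the tropical variety $\mathcal L_G\cdot\mathcal K$. (The corollary following Definition \ref{dfTropicalizationG}, which computes $X^{\rm trop}=s_G^*\mathcal K_{\Gamma,1}=\mathcal K_{\Delta,1}$ directly from Theorem \ref{thmBack}(3), confirms this reading.) With that, your chain of equalities closes immediately: $s_G^*\mathcal K_G=s_H^*(d\pi)^*\mathcal K_G=s_H^*\mathcal K_H$ by diagram (\ref{eqD1}) and property \hyperlink{[5]}{[{\bf 5}]}, with no residual identity to prove. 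Your preliminary reduction to $G\subset H$ should be kept, since Proposition \ref{prD1} only treats nested groups and the paper leaves that step implicit.
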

\begin{proof} Follows from (\ref{eqD1}).
\end{proof}
\begin{definition} \label{dfTropikalization}
A subring of the ring $\mathcal V(\R^n)$,
consisting of \EAS\ tropicalizations,
denote by $\mathcal E^{\rm quasi}$ and call
\emph{quasialgebraic ring of conditions for an affine space}.
\end{definition}
\begin{theorem} \label{thmqRing}
$\mathcal E^{\rm quasi}=\mathcal V(\R^n)$.
\end{theorem}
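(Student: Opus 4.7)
The plan is to reduce the statement to a hypersurface realization problem and then solve the latter directly. Since $\mathcal E^{\rm quasi} \subseteq \mathcal V(\R^n)$ holds by Definition \ref{dfTropikalization}, I will concentrate on the reverse inclusion.

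First I would invoke Theorem \ref{thmqIsoPol}(i) to identify $\mathcal V(\R^n)$ with the polyhedral ring ${\rm Pol}({\R^n}^*;{\R^n}^*)$, which, being a quotient of the symmetric algebra on virtual polyhedra with real vertices, is generated as a ring by the hypersurface fans $\mathcal K_{\Delta,1}$ for arbitrary convex polyhedra $\Delta\subset{\R^n}^*$. It therefore suffices to exhibit each such fan as the tropicalization of some \EAS.

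Next, given $\Delta$ with vertices $\lambda_1,\ldots,\lambda_m\in{\R^n}^*$, I would take $G\subset{\R^n}^*$ to be the finitely generated subgroup generated by these vertices together with any basis of ${\R^n}^*$, so that $G$ meets the hypotheses of Section \ref{EAS s}. The quasialgebraic \ES\
$$f(z)=\sum_{i=1}^{m}c_i\,\E^{\langle z,\lambda_i\rangle},\qquad c_i\neq 0,$$
has Newton polytope exactly $\Delta$ by construction, and by the unlabeled Corollary following Definition \ref{dfTropicalizationG}, the hypersurface $X=\{f=0\}$ satisfies $X^{\rm trop}=\mathcal K_{\Delta,1}$. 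Hence $\mathcal K_{\Delta,1}\in\mathcal E^{\rm quasi}$.

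The point that needs care is that the ring structure on ${\rm Pol}({\R^n}^*;{\R^n}^*)$ is fully captured inside $\mathcal E^{\rm quasi}$: sums of hypersurface tropicalizations lie in $\mathcal E^{\rm quasi}$ via unions of \EAS s (Lemma \ref{lmTropicalizations1}), products via intersections, virtual differences are absorbed by the ``subring'' clause of Definition \ref{dfTropikalization}, and the dilation $\Delta\mapsto\alpha\Delta$ with $\alpha>0$ realizes real scalars acting on $\mathcal K_{\Delta,1}$. Granting these structural checks, $\mathcal E^{\rm quasi}$ contains a generating set of $\mathcal V(\R^n)$ as a ring, and the two therefore coincide. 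I expect the only genuine subtlety to be that $\Delta$ may have real, possibly non-rational, vertices, but this is exactly what is accommodated by allowing $G$ to be any finitely generated subgroup of ${\R^n}^*$ rather than a lattice.
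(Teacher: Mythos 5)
Your proposal is correct and follows essentially the same route as the paper: both arguments reduce to the fact that $\mathcal V(\R^n)$ is generated as an $\R$-algebra by the hypersurface fans $\mathcal K_{\Delta,1}$ (with real scalars absorbed via $r\cdot\mathcal K_{\Delta,1}=\mathcal K_{r\Delta,1}$), and then realize each such fan as the tropicalization of a quasialgebraic exponential hypersurface with Newton polyhedron $\Delta$ by choosing a finitely generated group $G$ containing the vertices of $\Delta$ — exactly the content of the paper's Lemma \ref{lmSurj}. The only cosmetic difference is that you route the generation statement through Theorem \ref{thmqIsoPol}(i) rather than Theorem \ref{thmTropicalizations}, which changes nothing substantive.
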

%
%\noindent
We start with the proof of the following statement.
\begin{lemma}\label{lmSurj}
The ring $\mathcal V(\R^n)$ (regarded as a $\Z$-algebra)
  generated by elements of the form $s^*_G(K_{\Lambda, 1})$,
where $G$ and $\Lambda$ change respectively in the set of
finitely generated subgroups of the space ${\R^n}^*$
and in the set of convex polyhedra with vertices in the character lattice of the torus $\T_G$.
\end{lemma}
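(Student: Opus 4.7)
The plan is to reduce the lemma to Theorem~\ref{thmqIsoPol}(i), which identifies $\mathcal V(\R^n)$ with the ring ${\rm Pol}({\R^n}^*;{\R^n}^*)$ via the assignment $\Delta\mapsto\mathcal K_{\Delta,1}$. Since ${\rm Pol}({\R^n}^*;{\R^n}^*)$ is a quotient of the symmetric algebra on virtual polyhedra, it is generated as a ring by the classes of convex polyhedra; moreover $\mathcal K_{t\Delta,1}=t\cdot\mathcal K_{\Delta,1}$ for every $t>0$ (the edge weights of the dual fan scale linearly with the polyhedron), so every positive real scalar multiple of a generator is itself of the form $\mathcal K_{\Delta',1}$. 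Consequently $\mathcal V(\R^n)$ is generated as a $\Z$-algebra by the tropical fans $\mathcal K_{\Delta,1}$, with $\Delta$ running over all convex polyhedra of ${\R^n}^*$; it is therefore enough to realise each such $\mathcal K_{\Delta,1}$ in the form $s^*_G(\mathcal K_{\Lambda,1})$ with $G$ and $\Lambda$ as in the lemma.

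Given $\Delta$ with vertices $v_1,\ldots,v_m\in{\R^n}^*$, I would take $G\subset{\R^n}^*$ to be the subgroup generated by $v_1,\ldots,v_m$ together with a fixed basis of ${\R^n}^*$. This $G$ is finitely generated and contains a basis, so the winding construction of Section~\ref{EAS s} applies and in particular produces the adjoint operator $s'_G\colon(\re\mathcal T_G)^*\to{\R^n}^*$. As recorded in the proof of Theorem~\ref{thmTropicalizations}, $s'_G$ sends the character lattice of $\T_G$ onto $G\subset{\R^n}^*$; in particular each vertex $v_j$ admits a lift $\tilde v_j$ to a character lattice point. Setting $\Lambda=\mathrm{conv}(\tilde v_1,\ldots,\tilde v_m)$, the vertices of $\Lambda$ form a subset of $\{\tilde v_1,\ldots,\tilde v_m\}$ and therefore lie in the character lattice, while $s'_G\Lambda=\Delta$ by linearity. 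Property~\hyperlink{[4]}{[{\bf 4}]} of the pull-back then yields
$$
s^*_G\mathcal K_{\Lambda,1}=\mathcal K_{s'_G\Lambda,1}=\mathcal K_{\Delta,1},
$$
as required.

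The step I expect to require most care is the passage from the $\R$-algebra generation supplied by Theorem~\ref{thmqIsoPol}(i) to the $\Z$-algebra generation asserted in the lemma; this rests on the homogeneity identity $\mathcal K_{t\Delta,1}=t\,\mathcal K_{\Delta,1}$ together with the freedom to enlarge $G$ so as to accommodate scaled vertices. Once this is granted, the rest of the argument is a direct unwinding of the definitions and of the functorial properties \hyperlink{[1]}{[{\bf 1}]}--\hyperlink{[4]}{[{\bf 4}]} listed at the start of Section~\ref{EG}.
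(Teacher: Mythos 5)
Your proposal is correct and follows essentially the same route as the paper: reduce to the fact that the fans $\mathcal K_{\Delta,1}$ generate $\mathcal V(\R^n)$ as an $\R$-algebra, use the homogeneity $r\cdot\mathcal K_{\Delta,1}=\mathcal K_{r\Delta,1}$ to absorb real scalars and pass to $\Z$-algebra generation, and realise each $\mathcal K_{\Delta,1}$ as $s^*_G(\mathcal K_{\Lambda,1})$ by choosing $G$ to contain the vertices of $\Delta$, lifting them to character-lattice points, taking $\Lambda$ to be their convex hull, and applying property \hyperlink{[4]}{[{\bf 4}]}. The only cosmetic difference is that you cite Theorem \ref{thmqIsoPol}(i) where the paper cites Theorem \ref{thmTropicalizations}; the underlying generation statement is the same.
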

\begin{proof}
Let $\Delta\subset{\R^n}^*$ be a convex polyhedron.
By construction,
the linear operator $s'_G\colon(\re\mathcal T_G)^*\to{\R^n}^*$,
adjoint to $s_G$,
takes the points of the character lattice of the torus $\T_G$ to the points of the group $G$.
Choose the group $G$,
containing the vertices of the polytope $\Delta$.
Let $\chi_\delta$ be a character of $\T_G$,
such that $s'_G(\chi_\delta)=\delta$.
We denote by $\Lambda$  the convex hull of the characters $\chi_\delta$.
Then $\Lambda$ is a convex polyhedron in the space $(\re\mathcal T_G)^*$,
and $s'_G(\Lambda)=\Delta$.
From here, according to \hyperlink{[4]}{[{\bf 4}]},
$\mathcal K_{\Delta, 1}=s^*_G(\mathcal K_{\Lambda, 1})$.

Tropical varieties of the form $\mathcal K_{\Delta,1}$
generate the $\R$-algebra $\mathcal V(\R^n)$
(see Theorem \ref{thmTropicalizations}).
Now it remains to notice that
$r\cdot\mathcal K_{\Delta,1}=\mathcal K_{r\cdot\Delta, 1}$
for any real $r>0$.
\end{proof}
\noindent
\emph{Proof of Theorem} \ref{thmqRing}.
It follows from Lemma \ref{lmSurj},
because $\mathcal K_{\Delta, 1}$ is the tropicalization of an exponential hypersurface
with the Newton polyhedron $\Delta$.
\begin{corollary} \label{cortoTrop2}
$\Z$-algebra $\mathcal E^{\rm quasi}$
is generated by the tropicalizations of exponential
hypersurfaces.
\end{corollary}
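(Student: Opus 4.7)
The plan is to deduce this corollary essentially immediately by combining Theorem \ref{thmqRing} with the unnamed corollary following Definition \ref{dfTropicalizationG}, which identifies the tropicalization of a quasialgebraic exponential hypersurface with a fan of the form $\mathcal K_{\Delta,1}$. The work has already been done; the corollary just rearranges it.

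First I would invoke Theorem \ref{thmqRing} to replace $\mathcal E^{\rm quasi}$ by the full ring $\mathcal V(\R^n)$. Next I would recall from Lemma \ref{lmSurj} (and in fact from Theorem \ref{thmTropicalizations}) that $\mathcal V(\R^n)$ is generated as a $\Z$-algebra by elements of the form $\mathcal K_{\Delta,1}$, where $\Delta \subset {\R^n}^*$ ranges over convex polyhedra. Concretely, Lemma \ref{lmSurj} writes any $\mathcal K_{\Delta,1}$ as $s^*_G(\mathcal K_{\Lambda,1})$ for a suitably chosen finitely generated $G$ containing the vertices of $\Delta$ and a polyhedron $\Lambda$ in the character lattice of $\T_G$ with $s'_G(\Lambda) = \Delta$; but what matters here is only the conclusion that such $\mathcal K_{\Delta,1}$ generate everything.

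The final step is to observe that each generator $\mathcal K_{\Delta,1}$ is itself the tropicalization of an exponential hypersurface: given any convex polyhedron $\Delta \subset {\R^n}^*$, pick a finite set $\Lambda_0 \subset {\R^n}^*$ whose convex hull is $\Delta$, form the quasialgebraic \ES\ $f(z) = \sum_{\lambda \in \Lambda_0} c_\lambda \E^{\langle z, \lambda\rangle}$ with generic coefficients, and let $X = \{f = 0\}$. By the corollary stated just after Definition \ref{dfTropicalizationG}, $X^{\rm trop} = \mathcal K_{\Delta,1}$. Hence every generator of $\mathcal E^{\rm quasi}$ lies in the subring generated by hypersurface tropicalizations, which proves the claim.

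I do not expect a real obstacle: the genuine content sits in Theorem \ref{thmqRing} (surjectivity onto $\mathcal V(\R^n)$) and in Theorem \ref{thmTropicalizations} (generation by polyhedral fans $\mathcal K_{\Delta,1}$). The only thing to verify here is that the vertex condition on $\Delta$ needed for an $\ES$-realization is vacuous in the quasialgebraic setting, since any finite set of real vectors can be the support of a quasialgebraic \ES. So the proof is essentially a one-line deduction.
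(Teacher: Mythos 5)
Your proposal is correct and follows essentially the same route as the paper: the paper's proof of this corollary is just ``follows from Lemma \ref{lmSurj}'', and the chain you spell out (Theorem \ref{thmqRing} identifies $\mathcal E^{\rm quasi}$ with $\mathcal V(\R^n)$, Lemma \ref{lmSurj} gives generators of the form $\mathcal K_{\Delta,1}$, and each such fan is the tropicalization of a quasialgebraic exponential hypersurface with Newton polyhedron $\Delta$) is exactly the argument the paper uses to prove Theorem \ref{thmqRing} itself. Nothing is missing; you have merely made explicit what the paper leaves implicit.
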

\begin{proof}
Follows from Lemma \ref{lmSurj}.
\end{proof}
\section{Domains of relatively full measure}\label{Def3}
This section defines some of the notions used in the formulation of \EAS s intersection theorems.
\begin{definition} \label{dfDensity}
Let $Y$ be a subset of a finite-dimensional Euclidean vector space $E$,
 $B_r\subset E$ be a ball of radius $r$ with center at $0$,
 and $\sigma_n$ be a volume of the unit ball in $\R^n$.
 Denote by $N(Y,r)$ the number of isolated points of $Y\cap B_r$.
 If
  $
 \lim_{r\to\infty}\frac{N(Y,r)}{\sigma_n r^n}
 $
 exists, then we denote it by $d_n(Y)$
 and call the $n$-density of $Y$.
 \end{definition}
\begin{example}\label{ex2.2}
If \EAS\ $X\subset\C^1$ is given by the equation $f(z)=0$,
then the $1$-density $d_1(X)$ exists and equals to $\frac{p}{2\pi}$,
where $p$ is a semiperimeter of the Newton polygon of $f$.
If $f(z)=\E^{\alpha z}-c$,
then $d_1(X) = \frac{|\alpha|}{2\pi}$
(the perimeter of the polygon "segment" \ is considered equal to its doubled length).
\end{example}
\begin{definition} \label{dfApproxLattice}
{\rm(1)}
Let $\mathcal Z\subset E$ be a lattice in the space $E$ with an integer positive multiplicity $m(\mathcal Z)$,
and let $X\subset E$ be a set of points with multiplicities.
We call $X$ $\:\varepsilon$-perturbation of the shifted lattice $z+\mathcal Z$,
if  {\rm1)}  $X$ belongs to $\varepsilon$-neighborhood
$(z+\mathcal Z)_\varepsilon$ of this lattice and
{\rm2)} in $\varepsilon$-neighborhood of any point $x\in z +\mathcal Z$
contains exactly $m(\mathcal Z)$ points $X$.

{\rm(2)}
If the sets $X_1,\ldots,X_m$ are $\varepsilon$-perturbations of
shifted lattices $z_j+\mathcal Z_j$,
then the set $\bigcup_{1\leq j\leq m} X_j$ we call
an $\varepsilon$-perturbation of a union of shifted lattices
$\bigcup_{1\leq j\leq m} (z_j + \mathcal Z_j)$.
\end{definition}
\begin{corollary}\label{corpertLattice}
Let $X$ be an $\varepsilon$-perturbation of a union
of $z_1+\mathcal Z_1,\ldots$ and let $\forall j\colon\:\rank Z_j=n$.
Then the $n$-density $d_n(X)$ exists and equal $\sum_j d_n(\mathcal Z_j)$.
\end{corollary}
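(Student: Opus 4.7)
The plan is to reduce to a single perturbed full-rank lattice, where the claim follows from the classical Gauss lattice-point estimate, and then to deduce the general union statement by showing that overlaps between distinct perturbed lattices grow only as $O(r^{n-1})$.

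First I would handle the single-lattice case. Suppose $X$ is an $\varepsilon$-perturbation of one shifted lattice $z+\mathcal Z$ with $\rank\mathcal Z=n$. The two clauses of Definition~\ref{dfApproxLattice} sandwich the count $N(X,r)$ between $m(\mathcal Z)\,|(z+\mathcal Z)\cap B_{r-\varepsilon}|$ and $m(\mathcal Z)\,|(z+\mathcal Z)\cap B_{r+\varepsilon}|$: every point of $X\cap B_r$ sits in the $\varepsilon$-ball of some lattice point of $z+\mathcal Z$ that must itself belong to $B_{r+\varepsilon}$, while every lattice point of $z+\mathcal Z$ inside $B_{r-\varepsilon}$ contributes all its $m(\mathcal Z)$ points of $X$ to $B_r$. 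The Gauss estimate gives both bounds the form $m(\mathcal Z)\sigma_n r^n/\vol(E/\mathcal Z)+O(r^{n-1})$, so dividing by $\sigma_n r^n$ yields $d_n(X)=m(\mathcal Z)/\vol(E/\mathcal Z)=d_n(\mathcal Z)$.

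Next, for $X=\bigcup_j X_j$ I would count with multiplicities, so that $N(X,r)=\sum_j N(X_j,r)$ up to an overlap term counting points that lie simultaneously in $X_j$ and $X_k$ for some $j\ne k$. Any such point witnesses a pair $(p,q)\in(z_j+\mathcal Z_j)\times(z_k+\mathcal Z_k)$ with $|p-q|\leq 2\varepsilon$, i.e.\ a point $p$ of $z_j+\mathcal Z_j$ lying within $2\varepsilon$ of the lattice $z_k+\mathcal Z_k$. Provided the two shifted lattices do not coincide setwise, these $p$'s form a finite union of cosets of a proper sublattice of $\mathcal Z_j$, so there are $O(r^{n-1})$ of them in $B_r$. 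Combining this bound with the previous step and summing over $j$ produces
$$
d_n(X)=\sum_j d_n(X_j)=\sum_j d_n(\mathcal Z_j).
$$

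The main obstacle is the overlap control: one must rule out that a positive-density subset of $z_j+\mathcal Z_j$ lies within $2\varepsilon$ of $z_k+\mathcal Z_k$. When the two shifted lattices are distinct as sets, the structure of the sum lattice $\mathcal Z_j+\mathcal Z_k$ confines the offending points to a proper affine sublattice, giving the required $O(r^{n-1})$ bound. In the degenerate case in which two of the $z_j+\mathcal Z_j$ coincide as sets, one merges the corresponding perturbations by summing their multiplicities, which absorbs the overlap entirely and preserves the claimed identity.
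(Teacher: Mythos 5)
Your single-lattice step is correct and is essentially all the content this corollary has: the paper offers no proof at all, treating the statement as immediate from Definition \ref{dfApproxLattice} together with the standard Gauss count $\#\bigl((z+\mathcal Z)\cap B_r\bigr)=\sigma_n r^n/\mathrm{covol}(\mathcal Z)+O(r^{n-1})$, which is exactly your sandwich argument (modulo the tacit assumption that $\varepsilon$ is smaller than half the minimal vector length of $\mathcal Z$, so that the $\varepsilon$-balls around distinct lattice points are disjoint and the lower bound is not an overcount).

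The overlap-control step, however, is wrong. The set of points $p\in z_j+\mathcal Z_j$ with $\mathrm{dist}(p,\,z_k+\mathcal Z_k)\le 2\varepsilon$ is in general \emph{not} of size $O(r^{n-1})$ in $B_r$, and it is not confined to cosets of a lower-rank sublattice. Two counterexamples: (a) $\mathcal Z_j=\mathcal Z_k=\Z^n$ with $z_k-z_j$ of length $\le 2\varepsilon$ --- the shifted lattices are distinct as sets, yet \emph{every} point of one lies within $2\varepsilon$ of the other; (b) $\mathcal Z_j=\Z^2$, $\mathcal Z_k=\sqrt2\,\Z\times\Z$ --- by equidistribution of $a\sqrt2 \bmod 1$, a fixed positive proportion (of order $\varepsilon$) of the points of $\mathcal Z_j$ lie within $2\varepsilon$ of $\mathcal Z_k$. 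Note also that a ``proper sublattice'' of a rank-$n$ lattice can have finite index and hence rank $n$ and positive density, so even where your structural claim holds it does not yield $O(r^{n-1})$. Consequently, under a genuinely set-theoretic reading of the union (coincident points counted once) the corollary itself would be false: one can realize example (a) with $X_1=X_2=\Z^n$, giving $d_n(X)=1\ne 2$. The intended reading --- forced by the phrase ``set of points with multiplicities'' in Definition \ref{dfApproxLattice} --- is that $\bigcup_j X_j$ is a multiset union and $N(X,r)$ counts with multiplicity, whence $N(X,r)=\sum_j N(X_j,r)$ identically and no overlap estimate is needed. So you should delete the overlap paragraph and replace it with this one-line observation; your first paragraph then finishes the proof.
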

Let $\mathfrak I = \{I\}$ be a finite set of proper subspaces in
 finite dimensional real vector space $E$ and
 $\,B_{\mathfrak I}=E\setminus\bigcup_{I\in\mathfrak I}I$.
 Denote by $B_{\mathfrak I,1},B_{\mathfrak I, 2},\ldots$ the connected components
 of $B_{\mathfrak I}$.
 For $0<R\in\R$ we denote by $B^R_{\mathfrak I}\subset E$ the set of points
located at a distance $\geq R$ from $\bigcup_{I\in\mathfrak I}I$.
\begin{definition} \label{dfdRelFull}
We say that $U\subset E$
   is a domain of relatively full measure (\rfm),
  if $\mathfrak I$ and $R>0$ exist,
  such,
  that $U\supset B^R_\mathfrak I$.
The set of subspaces $\mathfrak I = \{I \}$
we will call the base of \rfm.
If an integer lattice is given in the space $E$,
and all the subspaces $I\in\mathfrak I$ are rational (i.e., generated by lattice points),
then the base is called rational.
We say that  \rfm\ with a rational base is a rational \rfm.
\end{definition}
\noindent
Let's list some corollaries of Definition \ref{dfdRelFull}.
\begin{corollary}\label{corrfm}
{\rm(1)}
The union and intersection of \rfm {\rm s} is also \rfm.
The rationality property of \rfm\ under unions and intersections of \rfm  {\rm s} is preserved.

{\rm(2)}
The property of the domain to be \rfm\
is independent of the choice of metric in the space $E$.

{\rm(3)}
If the subspace $L\subset E$ does not belong to the base of \rfm\ $U$,
then $U\cap L$ is \rfm\ in the space $L$.

{\rm(4)}
The inverse image of \rfm\ under a surjective linear map
is \rfm.

{\rm(5)} Domains $B^R_{\mathfrak I, i} = B^R_{\mathfrak I} \cap B_{\mathfrak I, i}$
are the connected components of
 \rfm\ $B^R_{\mathfrak I}$.
 \end{corollary}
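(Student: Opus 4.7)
The plan is to derive each of the five assertions directly from Definition~\ref{dfdRelFull}, using only elementary linear algebra and the equivalence of norms on finite-dimensional real spaces. For (1), given $U_j\supset B^{R_j}_{\mathfrak{I}_j}$ for $j=1,2$, set $\mathfrak{I}:=\mathfrak{I}_1\cup\mathfrak{I}_2$ and $R:=\max(R_1,R_2)$; then $B^R_\mathfrak{I}\subset B^{R_j}_{\mathfrak{I}_j}\subset U_j$, so $B^R_\mathfrak{I}$ lies in both $U_1\cap U_2$ and $U_1\cup U_2$, and rationality is clearly preserved by unions of rational families. For (2), if $c\|\cdot\|_1\le\|\cdot\|_2\le C\|\cdot\|_1$ on $E$, then $B^R_\mathfrak{I}$ computed in $\|\cdot\|_2$ contains $B^{R/c}_\mathfrak{I}$ computed in $\|\cdot\|_1$.

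For (3), one should read ``$L$ does not belong to the base'' as $L\not\subset I$ for every $I\in\mathfrak{I}$ (otherwise $L\cap B^R_\mathfrak{I}$ is empty and the statement is vacuous or false). Set $\mathfrak{I}|_L:=\{\,L\cap I: I\in\mathfrak{I}\,\}$, a finite family of proper subspaces of $L$. The induced linear map $L\to E/I$ has kernel $L\cap I$, hence factors through the injection $L/(L\cap I)\hookrightarrow E/I$; since both sides are finite-dimensional, this injection is bounded below, giving $d_E(x,I)\ge c_I\, d_L(x,L\cap I)$ for some $c_I>0$. With $c:=\min_I c_I$ one obtains $L\cap B^R_\mathfrak{I}\supset B^{R/c}_{\mathfrak{I}|_L}$. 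For (4), surjectivity of $f\colon E\to F$ makes each composition $E\to F\to F/I$ surjective, so by the open mapping theorem (trivial in finite dimension) it admits a bounded right inverse; this yields $d_E(x,f^{-1}(I))\le C_I\, d_F(f(x),I)$. Setting $\mathfrak{I}_E:=\{f^{-1}(I):I\in\mathfrak{I}_F\}$ and $C:=\max_I C_I$, one gets $f^{-1}(B^R_{\mathfrak{I}_F})\supset B^{CR}_{\mathfrak{I}_E}$.

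For (5), the components $B_{\mathfrak{I},i}$ are pairwise disjoint and clopen in $B_\mathfrak{I}$, and $B^R_\mathfrak{I}\subset B_\mathfrak{I}$; hence the $B^R_{\mathfrak{I},i}=B^R_\mathfrak{I}\cap B_{\mathfrak{I},i}$ are pairwise disjoint, relatively clopen in $B^R_\mathfrak{I}$, and exhaust it. It remains to show each $B^R_{\mathfrak{I},i}$ is connected. The key observation is that $\bigcup_{I\in\mathfrak{I}} I$ is invariant under all nonzero homotheties, so each $B_{\mathfrak{I},i}$ is a positive cone and $d(\lambda x,\bigcup I)=\lambda\,d(x,\bigcup I)$ for $\lambda>0$. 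Given $x,y\in B^R_{\mathfrak{I},i}$, a path $\gamma$ in $B_{\mathfrak{I},i}$ joining them achieves a positive minimum distance $r>0$ from $\bigcup I$; for $\lambda:=R/r\ge 1$ the rescaled path $\lambda\gamma$ stays in $B^R_{\mathfrak{I},i}$, and the radial segments joining $x$ to $\lambda x$ and $\lambda y$ to $y$ do so as well, since along each such segment the distance to $\bigcup I$ is positively homogeneous. Concatenation gives the required path.

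The main technical obstacle is (5): without the cone structure a connected open set may easily be disconnected when one excises an $R$-neighborhood of its boundary (consider a thin annulus), so the scale-invariance of $\bigcup I$ is precisely what rescues the statement. The other four items are routine once one recognises that all the relevant inequalities between distances to subspaces are instances of the open mapping theorem in finite dimension.
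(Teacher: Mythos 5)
Your proof is correct, and in fact the paper offers no argument at all for this corollary --- it is simply listed as a set of consequences of Definition~\ref{dfdRelFull} --- so your write-up supplies exactly the verification that is left implicit. Items (1), (2) and (4) are indeed the routine norm-equivalence/quotient-norm estimates you describe. Two of your observations deserve to be kept: first, in (3) the literal reading ``$L\notin\mathfrak I$'' is insufficient (a line $L$ strictly inside a plane $I\in\mathfrak I$ satisfies $L\notin\mathfrak I$ yet $L\cap B^R_{\mathfrak I}=\emptyset$, and the empty set is not an \rfm), so your strengthened hypothesis $L\not\subset I$ for all $I\in\mathfrak I$ is what the statement must mean; second, (5) is the only item with real content, and your homothety argument --- that $\bigcup_{I\in\mathfrak I}I$ is scale-invariant, so each component $B_{\mathfrak I,i}$ is an open positive cone on which $d(\lambda x,\bigcup I)=\lambda\,d(x,\bigcup I)$, allowing any path to be pushed radially outward into $B^R_{\mathfrak I}$ --- is the right mechanism and also shows each $B^R_{\mathfrak I,i}$ is nonempty, so the list of components is exactly as claimed.
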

\section{Intersections of \EAS s}
\label{dens}
Throughout, it is assumed that,
if a finite set of \EAS s and the group $G\subset{\R^n}^ * $ are involved in the statement,
then all these \EAS s \emph {are given by equations from the ring $ E_G $.}
For a fixed $G$
such \EAS s one-to-one correspond to algebraic subvarieties of the torus $\T_G$
(recall that the variety $M_G$ corresponding to \EAS\ $X$ is called the model $X$).
Therefore, the action of the torus on varieties is also defined on \EAS s.
The action of the element $t\in\T_G$ on \EAS\ $ X $ is denoted by $t\colon X\to X^t$.
This action is a ''toric shift'',
those. continuation of the shift action $\C^n$:
if $z\in\C^n$, then $z + X = X^{\omega_G (z)} $.

Let $X$ be an \EAS\ of algebraic codimension $n$.
The following theorem states
that there exists a finite set of proper subspaces $\mathfrak I$
of the space $\re\mathcal T_G$, such that
if $R\gg0$ and ${\rm Re}\log t\in B^R_\mathfrak I$
(see Definition \ref{dfdRelFull})
then the toric shift $X^t$
is a small perturbation of a finite union
of shifted $n$-dimensional lattices,
located in $\im\C^n$.
\begin{theorem}\label{thmq1}
There is $\mathfrak I$ such that to each of the connected components
$B_{\mathfrak I, 1}, B_{\mathfrak I, 2} \ldots$ of the domain $B_\mathfrak I$
there corresponds a finite set of $n$-dimensional lattices
$$
\{\mathcal L_{i,j}\subset\im\C^n\colon j=1,\ldots,N_i\}
$$
(the lattices $\mathcal L_{i,1}, \mathcal L_{i,2},\ldots$ may sometimes match
and equipped with integer positive multiplicities),
such that for $R\gg0$ and ${\rm Re}\log t\in B^R_{\mathfrak I,i}$ we have:
\par\smallskip
{\rm(1)}
\EAS\ $\:X^t$
is an $\varepsilon$-perturbation of the union of shifted lattices
$$z_1(t)+\mathcal L_{i,1},\ldots,z_{N_i}(t)+\mathcal L_{i,N_i},$$
where functions $z_j(t)$ are continuous
and $\varepsilon\to0$ if $R\to\infty$

{\rm(2)}
$n$-density $d_n(X^t)$ independent of the choice $B_{\mathfrak I,i}$.
\end{theorem}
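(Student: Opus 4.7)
The plan is to reduce the statement to a geometric question about the translate $t\cdot M_G\subset\T_G$ and then to extract the lattice asymptotics from tropical/amoeba considerations. First I would reinterpret $X^t$ as $\omega_G^{-1}(t\cdot M_G)$: writing $v=\re\log t\in V$, a point $z=x+iy\in X^t$ must satisfy $s_G(x)-v\in\mathcal A(M_G)$, where $\mathcal A(M_G)\subset V$ is the amoeba of $M_G$. By classical amoeba theory, $\mathcal A(M_G)$ is asymptotic at infinity to the tropical fan $\mathcal K$ of $M_G$, so for $|v|$ large the admissible values of $s_G(x)$ are controlled by $(v+\mathcal K)\cap L_G$; since $\dim L_G+\dim\mathcal K=n+(q-n)=q=\dim V$, this intersection is generically $0$-dimensional, matching the fact that $X^{\rm trop}=s_G^*(\mathcal L_G\cdot\mathcal K)$ is a $0$-dimensional tropical fan.

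Next I would define $\mathfrak I$ as the finite collection of proper subspaces of $V$ arising as linear spans of walls in the arrangement built from $L_G$ together with the codimension-one faces of $\mathcal K$: concretely, include $L_G+\mathrm{span}(\sigma)$ for each proper face $\sigma$ of each cone of $\mathcal K$, plus the spans of any maximal cones of $\mathcal K$ that fail to be transverse to $L_G$. On each component $B_{\mathfrak I,i}$, the combinatorial type of $(v+\mathcal K)\cap L_G$ with tropical weights is constant, yielding a fixed configuration of points $p_{i,1},\ldots,p_{i,N_i}$, each lying in the relative interior of a maximal cone $\sigma_{i,j}\in\mathcal K$ of tropical weight $w_{i,j}$.

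The core step is the binomial reduction. For $x$ with $s_G(x)$ close to $p_{i,j}$ and $v\in B^R_{\mathfrak I,i}$, each defining exponential sum $f_k$ of $X^t$ is dominated by the characters lying on the edge of its Newton polyhedron dual to $\sigma_{i,j}$; the remaining characters are exponentially smaller by a factor tending to zero as $R\to\infty$ (precisely because staying far from $\mathfrak I$ forces strict dominance). Keeping only the leading pair in each equation produces a binomial system $c^{(k)}_{\alpha_k}(t)\,e^{\langle z,\alpha_k\rangle}+c^{(k)}_{\beta_k}(t)\,e^{\langle z,\beta_k\rangle}=0$, $k=1,\ldots,n$, whose common zero set is a shifted $n$-dimensional lattice $z_j(t)+\mathcal L_{i,j}\subset\im\C^n$, with $\mathcal L_{i,j}$ the lattice $2\pi$-dual to the $\Z$-span of $\{\alpha_k-\beta_k\}\subset\re\C^{n*}$ and multiplicity $w_{i,j}$. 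A Rouch\'e-type argument in small boxes around each approximate zero then upgrades this to a genuine $\varepsilon$-perturbation statement, with $\varepsilon\to0$ as $R\to\infty$ and continuous centers $z_j(t)$ provided by the implicit function theorem.

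For part (2), Corollary \ref{corpertLattice} gives $d_n(X^t)=\sum_j w_{i,j}\,d_n(\mathcal L_{i,j})$, and each summand reproduces the contribution of $p_{i,j}$ to the zero-cone weight of $s_G^*(\mathcal L_G\cdot\mathcal K)=X^{\rm trop}$; the total is exactly the weak density $d_w(X)$ of Definition \ref{dfWeakDensity}, which is intrinsic and therefore independent of the chamber $i$. The hardest step will be the quantitative Rouch\'e estimate in the third paragraph: it has to be uniform over the unbounded region $B^R_{\mathfrak I,i}$, has to rule out spurious zeros coming from near-cancellations among the subdominant characters, and has to produce exactly the right multiplicity $w_{i,j}$ when the local geometry of $M_G$ at $\sigma_{i,j}$ is non-reduced, so that the count agrees with the tropical prediction.
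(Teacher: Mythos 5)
Your overall strategy matches the paper's: both decompose $V=\re\mathcal T_G$ by a finite arrangement $\mathfrak I$ of walls recording where $v+L_G$ fails to meet the tropical fan $\mathcal K$ of the model $M_G$ transversally, both identify the lattices $\mathcal L_{i,j}$ as $\omega_G^{-1}$ of translates of subtori $\T_K$ transversal to the winding, and both derive (2) from the balancing condition, i.e.\ from the fact that the total density equals the zero-cone weight of $X^{\rm trop}=s_G^*(\mathcal L_G\cdot\mathcal K)$, which is chamber-independent. The difference is in the core local step, and that is where your argument has a genuine gap.

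The ``binomial reduction'' does not work for a general \EAS\ of algebraic codimension $n$. First, the model $M_G$ is the zero set of an ideal of $\C[\T_G]$ and need not be a complete intersection of $n$ hypersurfaces, so there is no canonical system $f_1=\cdots=f_n=0$ to truncate. Second, even for a hypersurface the face of the Newton polyhedron dual to a maximal cone $\sigma_{i,j}$ of $\mathcal K$ may carry more than two characters, so the ``leading pair'' is not well defined: the truncated equation is not a binomial, and its zero set is a union of several translates of the same subtorus (possibly with multiplicities), not a single shifted lattice. What is true, and what your Rouch\'e step would in effect have to reprove, is the structure theorem for initial degenerations: far from the skeleton $\mathcal K^1$, the variety $M_G$ is an $\varepsilon$-perturbation of a finite union of weighted toric tongues $\tau\exp(K+iV_K)$. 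This is exactly the paper's Theorem \ref{thmApprox}, which it invokes as a black box; the multiplicity bookkeeping in the non-reduced case, which you yourself flag as the hardest step, is contained in that theorem rather than in an elementary dominant-term estimate. Once that approximation is granted, your remaining steps (transversality of $L_G$ with $V_K$, constancy on each chamber of the set of cones met by $v+L_G$, continuity of the shifts $z_j(t)$, and Corollary \ref{corpertLattice} for the density) coincide with the paper's, so the fix is to replace the binomial reduction by an appeal to, or an independent proof of, the toric-tongue approximation.
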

\begin{theorem}\label{thmq2}
Let $\codima X + \codima Y=n$. Then there is a finite set of subspaces $\mathfrak I=\{I\subset\R^n\}$,
such that the following is true.
If $R$ is large enough, then for all $z\in B^R_{\mathfrak I}+\im\C^n\:$
\EAS s $(z+X)\cap Y$ are equidimensional,
  their algebraic codimensions are equal to $n$,
  and weak densities are the same.
\end{theorem}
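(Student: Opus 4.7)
The plan is to translate the intersection problem into a statement about torus translates of the model $M_X$ inside $\T_G$, invoke Kleiman's transversality there, and then transfer this back to $\C^n$ via the amoeba--tropicalization correspondence to produce the required base $\mathfrak I$ of subspaces in $\R^n$. The equality of weak densities will then be read off from shift invariance of the model tropicalization together with Definition \ref{dfIndex}.

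First I fix a finitely generated subgroup $G\subset\re(\C^n)^*$ whose ring $E_G$ contains the equations of both $X$ and $Y$, and pass to the models $M_X,M_Y\subset\T_G$ with $\codim M_X+\codim M_Y=n$. For $z\in\C^n$ the EAS $z+X$ has model $t\cdot M_X$ with $t=\omega_G(z)$, so $(z+X)\cap Y$ has model $tM_X\cap M_Y$ and algebraic codimension $\codim(tM_X\cap M_Y)$. By Kleiman's transversality theorem, applied to the transitive action of $\T_G$ on itself, there is a proper Zariski closed subset $D\subset\T_G$ such that for every $t\in\T_G\setminus D$ the intersection $tM_X\cap M_Y$ is either empty or equidimensional of codimension exactly $n$. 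Thus it suffices to exhibit an RFD domain of $z$'s for which $\omega_G(z)\notin D$.

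To do this I convert $\omega_G(z)\notin D$ into a condition on $\re z$. Since $G\subset\re(\C^n)^*$, one has $\log|\omega_G(z)|=s_G(\re z)\in V$, so it is enough to force $s_G(\re z)$ outside the amoeba $\log|D|$. Let $\mathcal D\subset V$ be the tropical fan of $D$; it is polyhedral of codimension $\geq 1$. The standard tropical limit of amoebas gives, for every $\varepsilon>0$, a radius $R_0$ with $\log|D|\cap\{\|v\|\geq R_0\}\subset\mathcal D+B_\varepsilon$. I then take
\[
\mathfrak I \;=\; \bigl\{\,s_G^{-1}(\langle\sigma\rangle) : \sigma\in\mathcal D,\ s_G^{-1}(\langle\sigma\rangle)\neq\R^n\,\bigr\},
\]
where $\langle\sigma\rangle\subset V$ is the linear span of $\sigma$. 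The induced map $\R^n/s_G^{-1}(\langle\sigma\rangle)\to V/\langle\sigma\rangle$ is an injection of finite dimensional spaces, so it has a bounded inverse on its image and yields a constant $C>0$ with $\mathrm{dist}(s_G(x),\langle\sigma\rangle)\geq C\cdot\mathrm{dist}(x,s_G^{-1}(\langle\sigma\rangle))$. Consequently, for $R$ large enough and $x\in B^R_\mathfrak I$, the point $s_G(x)$ lies far from every $\langle\sigma\rangle$, hence from $\mathcal D$, hence outside $\log|D|$; so $\omega_G(z)\notin D$ for every $z$ with $\re z\in B^R_\mathfrak I$ and arbitrary $\im z$. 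This establishes the equidimensionality and algebraic codimension statements.

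For the equality of weak densities I use that tropicalization is invariant under toric shift: the model of $z+X$ is a torus translate of $M_X$, and a torus translate has the same tropical fan, so $(z+X)^{\rm trop}=X^{\rm trop}$ by Definition \ref{dfTropicalizationG}. Then by Definition \ref{dfIndex},
\[
d_w\bigl((z+X)\cap Y\bigr)\;=\;I(z+X,Y)\;=\;I(X,Y),
\]
independent of $z$. The main technical obstacle I expect is the quantitative step: combining the asymptotic control of the amoeba by its tropicalization with the linear pullback by $s_G$ to make ``$\re z$ far from every subspace in $\mathfrak I$'' imply ``$\omega_G(z)$ outside $D$'', uniformly across the finitely many cones of $\mathcal D$.
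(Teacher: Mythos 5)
Your argument for the equidimensionality part takes a genuinely different route from the paper (Kleiman transversality in $\T_G$ plus an amoeba estimate, versus the paper's direct appeal to a tropical genericity statement), and that part is workable in outline, though your quantitative claim about the amoeba is stated too strongly: the amoeba of the bad hypersurface $D$ does not shrink into an $\varepsilon$-neighborhood of its tropical fan $\mathcal D$ at infinity; what is true (and what you actually need) is containment in a neighborhood of \emph{bounded} width, which still suffices once $R$ is large because your constant $C>0$ turns ``$\re z$ at distance $\geq R$ from every $I\in\mathfrak I$'' into ``$s_G(\re z)$ at distance $\geq CR$ from every $\langle\sigma\rangle$''. (Also note that $s_G^{-1}(\langle\sigma\rangle)$ is automatically proper: $L_G=s_G(\R^n)$ cannot lie in a proper rational subspace of $V$ since $\lambda_1,\ldots,\lambda_q$ are $\Z$-independent, so your exclusion clause is vacuous.)

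The genuine gap is in the weak-density step. You write $d_w\bigl((z+X)\cap Y\bigr)=I(z+X,Y)$, but this is not a definition and is not available to you: by Definition \ref{dfWeakDensity} the left side is the weight of the zero cone of $\bigl((z+X)\cap Y\bigr)^{\rm trop}$, i.e.\ of the tropicalization of the \emph{actual} intersection variety $tM_X\cap M_Y$, while by Definition \ref{dfIndex} the right side is computed from the \emph{stable} tropical product $X^{\rm trop}\cdot Y^{\rm trop}$. The identity between the two is exactly the tropical stable-intersection (fan displacement) theorem: for $g$ with $\re\log g$ in a suitable \rfm\ with rational base, the tropicalization of $gM_X\cap M_Y$ equals $\mathcal P\cdot\mathcal Q$. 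This is the single fact the paper's proof is built on --- it simultaneously yields equidimensionality, the correct codimension, \emph{and} the constancy of $d_w$, and it is also what determines the base $\mathfrak I$ (as $s_G$-preimages of the rational base of that \rfm). Kleiman's theorem only controls the dimension of $tM_X\cap M_Y$; two equidimensional intersections of the right codimension can still have different tropical fans, hence different zero-cone weights, so your argument as written does not rule out the weak density varying with $z$. To close the gap you must either cite the tropical statement the paper uses or prove that the tropicalization of the generic translated intersection is the stable product.
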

Apply tropical intersection index properties
% (see \ ref {quasiMain2})
leads to the next more familiar in algebra
statement of the previous theorem.
\begin{corollary}\label{corthmq2}
There is a quasi-algebraic exponential hypersurface $Z\subset\C^n$,
such that for $w\not\in Z$, the weak densities of all \EAS {\rm s} $(w + X)\cap Y$ are the same.
\end{corollary}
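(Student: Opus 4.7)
The plan is to reduce everything to algebraic geometry in the torus $\T_G$ via models, and then pull back. Choose a finitely generated subgroup $G \subset \operatorname{Re}(\C^n)^*$ containing the supports of the defining equations of both $X$ and $Y$, and let $M_G, N_G \subset \T_G$ be the corresponding models. Since $M_G$ and $N_G$ are algebraic subvarieties of the torus $\T_G$ with $\codim M_G + \codim N_G = n$, a standard transversality argument (``moving lemma'' for toric shifts of algebraic subvarieties of a torus) shows that there is a proper Zariski closed subset $\Sigma \subset \T_G$ such that for $t \notin \Sigma$ the intersection $tM_G \cap N_G$ is equidimensional of codimension $n$ and represents the generic intersection class. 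Enlarging $\Sigma$, we may assume $\Sigma = \{F = 0\}$ for a single Laurent polynomial $F \in \C[\T_G]$.

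Now set
$$Z := \omega_G^{-1}(\Sigma) = \{w \in \C^n : \omega_G^*(F)(w) = 0\}.$$
By Corollary \ref{corWinding2}, $\omega_G^*(F)$ is an ES with support in $G \subset \operatorname{Re}(\C^n)^*$, so $\omega_G^*(F)$ is a quasi-algebraic ES and $Z$ is a quasi-algebraic exponential hypersurface. For $w \notin Z$ one has $\omega_G(w) \notin \Sigma$, hence $\omega_G(w) M_G \cap N_G$ is the generic-type intersection, and so the EAS $(w+X) \cap Y$ is equidimensional of algebraic codimension $n$.

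Next I would identify the common value of the weak density outside $Z$ with the tropical intersection index $I(X, Y)$ from Definition \ref{dfIndex}. By Corollary \ref{corD1} the tropicalizations $X^{\rm trop}, Y^{\rm trop}$ do not depend on $G$, and toric translation does not affect tropicalization: $(w+X)^{\rm trop} = X^{\rm trop}$ for every $w \in \C^n$. Thus for every $w \notin Z$ the intersection cycle $(w+X)\cap Y$ has the same tropicalization $X^{\rm trop}\cdot Y^{\rm trop}$. To conclude that its weak density equals the weight of the zero cone of this $0$-dimensional tropical fan, it suffices to exhibit one good representative: using Theorem \ref{thmq2}, pick $w_0 \in B^R_{\mathfrak I} + i\R^n$ which is not contained in $Z$ (possible because $Z$ is a proper analytic set and $B^R_{\mathfrak I} + i\R^n$ has nonempty interior). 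For this $w_0$ the weak density exists, equals some constant $d$ depending only on $X, Y$, and by Theorem \ref{thmq2} is the same for every other generic $w$; translation invariance of the tropical intersection forces $d = I(X, Y)$. Since the generic value $d$ is determined by $X^{\rm trop}$ and $Y^{\rm trop}$ alone, it is the same for every $w \notin Z$.

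The main obstacle is making rigorous the claim that the exceptional set of shifts, encompassing \emph{all} pathologies — non-equidimensionality, atypical components (see Example \ref{exAnomal}), failure of the weak density limit to exist — really sits inside a single algebraic subvariety $\Sigma \subset \T_G$ that can be defined by one Laurent equation. Atypical components of EAS of algebraic codimension $n$ are constrained to proper affine subspaces, and the locus of toric parameters $t$ producing such atypical components is an algebraic subvariety of $\T_G$; combining this with the classical openness of the locus of proper algebraic intersection in $\T_G$, one obtains the desired $\Sigma$. Once this is established the pullback $\omega_G^{-1}(\Sigma)$ is a quasi-algebraic exponential hypersurface of the required type, and constancy of the weak density off $Z$ follows from the translation invariance of the tropicalization together with Corollary \ref{corMainG2}.
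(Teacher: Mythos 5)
Your proposal follows essentially the same route as the paper: the whole content is that the exceptional locus is an algebraic hypersurface $M\subset\T_G$ outside of which the tropicalization of $gP\cap Q$ equals $\mathcal P\cdot\mathcal Q$ (the paper simply cites this as a known fact of tropical algebraic geometry rather than deriving it from a moving lemma), and then $Z=\omega_G^{-1}(M)$ is the required quasialgebraic exponential hypersurface by Corollary \ref{corWinding2}. Your closing worries about atypical components and about the existence of a density limit are unnecessary detours, since the weak density is by Definition \ref{dfWeakDensity} the weight of the zero cone of the tropicalization of the model and is insensitive to those analytic pathologies; apart from that, your argument matches the paper's.
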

\section{Proof of Theorems \ref{thmq1}, \ref{thmq2}}
\label{proofs}
\subsection{Approximation by toric tongues.}
The proof of the theorems is based on
the use of approximation of algebraic variety by toric tongues.
The following notation is used below:

-\ $V=\re\mathcal T_G$, $N=\dim V$

-\ $K$ is a rational convex polyhedral cone in $V$

-\ $V_K\subset V$ is a subspace generated by $K$

-\ $\T_K\subset\T_G$ is the subtorus generated by the exponentials of the cone $K$

-\ $M\subset\T_G$ is a $k$-dimensional algebraic variety

-\ $\mathcal K$ is the tropical fan of the variety $M$

-\ $\mathcal K^m\subset\mathcal K$ is the subfan of cones of dimension $\leq k-m$

-\ $O_R(\mathcal K)=\{\tau\in\T_G\colon{\rm Re}\log\tau\not\in (\supp\mathcal K^1)_R\}$.
\par\smallskip
\begin{definition}\label{df_t}
The subset of $\T_G$
$$
t_{K,\tau}=\tau\exp(K+iV_K)\subset\tau\T_K
$$
is called \emph{a toric tongue}.
Cone $K$ and $\tau\in\T_G$ called respectively \emph{the base} and \emph{the shift}
of the tongue $t_{K,\tau}$.
\end{definition}
Let $\S,\L$ be subtori of $\T_G$,
 such that $\dim\S+\dim\L=N$ and $\#(\L\cap\S)=1$,
 and let $U$ be an open domain in some shift of the torus $\S$.
Consider a domain
 $$
U_\varepsilon=\{l\cdot t\:\vert\: l\in\L_\varepsilon, t\in U\},
 $$
 where $\L_\varepsilon$ is the $\varepsilon$-neighborhood of a unit in $\L$
and define the mapping
$\pi_\varepsilon\colon U_\varepsilon\to U$
 as
 $\pi_\varepsilon\colon l\cdot t\mapsto t.$
\begin{definition} \label{dfPert}
The subdomain $M\cap U_\varepsilon$ of the variety $M$
we call $\varepsilon$-perturbation of the domain of $U$,
 if the restriction  $\pi_\varepsilon$ to $M\cap U_\varepsilon$
 is a finite sheeted unramified covering of $U$.
\end{definition}
\begin{definition} \label{dfApproxSet}
Let $T(M)$ be a finite set of pairwise disjoint
$k$-dimensional toric tongues.
%$\mathcal K$ -- $k$-dimensional fan of cones.
We call $T(M)$
\emph{an approximating set of tongues
$M$ with an approximating  $k$-dimensional fan} $\mathcal K$,
if the following is true

{\rm(i)} $\dim\mathcal K=k$ and
the set of tongue bases coincide with the set of
$k$-dimensional cones $K\in\mathcal K$

{\rm(ii)}
for any $\varepsilon$ there is $R$,
such that in $O_{R}(\mathcal K)$
the variety $M$ coincides with the union
$\varepsilon$-perturbations of all domains of the form
$O_{R-1}(\mathcal K)\cap t_{K,\tau}$,
where $t_{K,\tau}\in T(M)$.

We will call these $\varepsilon$-perturbations \emph{the perturbations of toric tongues}.
The degree of covering of a tongue from Definition \ref{dfPert} is called a \emph{weight of tongue}.
\end{definition}
\begin{theorem}\label{thmApprox}
A finely divided tropical fan of the algebraic variety $M$ is the approximating fan of $M$.
\end{theorem}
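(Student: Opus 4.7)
The plan is to realize the approximating set of tongues as the tubular structure of $M$ near the boundary of a toric partial compactification associated to $\mathcal K$. By refining the subdivision of $\mathcal K$ I would first arrange that $\mathcal K$ is simplicial and unimodular, so that the partial toric variety $X_{\mathcal K}$ whose fan is $\mathcal K$ is smooth and contains $\T_G$ as its open orbit, with one torus orbit $O_K$ of dimension $N-\dim K$ for each cone $K \in \mathcal K$.

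Next, I would pass to the closure $\overline M \subset X_{\mathcal K}$. By the structure theorem of tropical geometry (in the form used already for property \hyperlink{[5]}{[\textbf{5}]} in this paper), for every top-dimensional cone $K \in \mathcal K$ the intersection $\overline M \cap O_K$ consists of finitely many smooth points $p_{K,1}, \ldots, p_{K, m_K}$, and the total multiplicity of this intersection equals the tropical weight $w(K)$. Near each $p_{K,j}$ the toric variety $X_{\mathcal K}$ looks like $\T_K \times D^{N-k}$ in analytic coordinates, with $O_K$ cut out by the $D^{N-k}$ factor; since $\overline M$ is smooth and transverse to $O_K$ at $p_{K,j}$, it is there the graph of an analytic map from a neighborhood in the $\T_K$-factor to $D^{N-k}$. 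Pulling this local picture back under the inclusion $\T_G \hookrightarrow X_{\mathcal K}$ produces precisely a toric tongue $t_{K, \tau_{K,j}}$, where $\tau_{K,j} \in \T_G$ is determined by the fiber of $p_{K,j}$ over the open orbit, with weight equal to the local intersection multiplicity; this gives the set $T(M)$ required by Definition \ref{dfApproxSet}(i).

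To verify (ii), I would fix $\varepsilon$ and use that the transversal coordinate of $\overline M$ at $p_{K,j}$ tends to $0$ as the $\T_K$-coordinate goes to infinity along $K$. Choosing $R$ large enough, the union over all $(K, j)$ of these local graphs covers all of $M \cap O_R(\mathcal K)$, and on each piece the projection $\pi_\varepsilon$ from Definition \ref{dfPert} is an unramified cover of the corresponding truncated tongue whose degree equals the weight. The restriction to the complement of the $R$-neighborhood of $\mathrm{supp}\,\mathcal K^1$ is essential, because near a codimension-one wall the tongues of two adjacent top-dimensional cones begin to overlap and the tubular picture breaks down.

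The main obstacle is the uniformity in $\varepsilon$: as $\mathrm{Re}\log\tau$ escapes to infinity inside $K$, one must bound the transversal deviation of $M$ from the affine subtorus $\tau \T_K$ independently of $\tau$. I would handle this by quotienting by the $\T_K$-action so that the boundary stratum $O_K$ becomes compact in each local chart, then extracting uniform analytic estimates from the vanishing of the defining equations of $\overline M$ on $O_K$ near the compact set $\overline M \cap O_K$.
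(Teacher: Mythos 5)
The paper does not actually supply a proof of Theorem \ref{thmApprox} --- it is stated and then used, with the details deferred to the full article --- so there is nothing to compare your plan against line by line. On its own terms, your plan follows the natural and essentially correct route: refine $\mathcal K$ to a unimodular fan, compactify $\T_G$ to the toric variety $X_{\mathcal K}$, take the closure $\overline M$, and read off the tongues from the finite sets $\overline M\cap O_K$ over the top-dimensional cones $K$, with weights given by intersection multiplicities. This is exactly the mechanism by which tropicalization controls the behaviour of $M$ at infinity, and it correctly explains both where the hypothesis ``finely divided'' enters (it is what guarantees, via the theory of tropical compactifications, that $\overline M$ meets each orbit $O_K$ properly in dimension $\dim M-\dim K$, hence in finitely many points when $\dim K=\dim M$) and why the $R$-neighbourhood of $\supp\mathcal K^1$ must be excised.

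Two steps need repair or elaboration before this is a proof. First, you assert that the points of $\overline M\cap O_K$ are smooth points of $\overline M$ at which the intersection is transverse, so that $\overline M$ is locally a graph over the $\T_K$-direction; neither smoothness nor transversality is automatic, and in general the local model is a finite \emph{branched} analytic cover of a neighbourhood in $\tau\T_K$ of degree equal to the local intersection multiplicity. What Definition \ref{dfPert} requires is an \emph{unramified} cover of the truncated tongue, so you must argue that the branch locus of the projection $\pi_\varepsilon$ restricted to $M$ does not accumulate at infinity in the direction of the interior of $K$ --- e.g.\ because it is an analytic subset of $M$ of smaller dimension whose own tropicalization is contained in $\supp\mathcal K^1$, hence avoided in $O_R(\mathcal K)$ for $R$ large. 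Second, condition (ii) of Definition \ref{dfApproxSet} demands that in $O_R(\mathcal K)$ the variety $M$ \emph{coincides with} the union of the perturbed tongues, not merely that it is covered near each tongue; for this you need the standard but unproved-in-your-sketch fact that ${\rm Re}\log M$ lies within bounded Hausdorff distance of $\supp\mathcal K$, so that no piece of $M$ can sit in $O_R(\mathcal K)$ far from every tongue. With those two points supplied (and your concluding remark about uniformity in $\tau$, handled by compactness of $\overline M\cap O_K$ after quotienting by $\T_K$, is the right way to close the estimate), the argument goes through.
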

\subsection{Proof Theorem \ref{thmq1}}
Let $M\subset\T_G$ be a model of \EAS\ $X$,
$\codima X=n$.
Recall
that \EAS\ $X^g =\omega^{-1}_G(g^{-1}M)$,
where $\omega_G\colon\C^n\to\T_G$ is the mapping of standard winding,
called the toric shift \EAS\ $X$.
Let $\mathcal K$ and $T(M)=\{t_{K,\tau}\}$ be respectively
an approximating fan and a set of approximating tongues for $M$.
Then $T(g^{-1}(M))=\{t_{K,g^{-1}\tau}\}$ is the set of approximating tongues for $g^{-1}M$.
The approximating fan for $g^{-1}M$  equal $\mathcal K$.
Recall that $L_G={\rm Re}\log\:\omega_G(\C^n)\subset V$.
We give a sequence of simple statements,
leading to the proof of Theorem \ref{thmq1}.
\par\medskip
\textbf{(1)}
If $\dim K=k $ and the intersection $L_G\cap V_K$ is transversal,
then $\omega_G^{-1}\T_K$ is a $n$-dimensional lattice in the space $\im\C^n$.
\par\smallskip
\textbf{(2)} Consider the subset $\mathcal D(L_G,\mathcal K)\subset V$, consisting of points $v$,
such that the intersection $(v+L_G)\cap\supp\mathcal K$ is nonempty and not transversal.
Obviously, $\mathcal D(L_G,\mathcal K)$ belongs to the union of a finite set $\mathfrak I$
of proper subspaces of the space $V$.
%(from the conditions of the theorem \ ref {thmq1}).
Since, by construction, $\mathcal K^1\subset\mathcal D(L_G,\mathcal K)$, then

[\textbf{i}]\
if $v\in V\setminus\mathcal D(L_G,\mathcal K)_R$,
where $R$ is big enough then the affine subspace $v+L_G\subset V$ is located at a sufficiently large distance from the skeleton $\mathcal K^1$.
\par\smallskip
\textbf{(3)}
Let $\mathcal B$ be a connected component of $V\setminus\mathcal D(L_G,\mathcal K)$
and $v\in\mathcal B$.
Denote by $\mathcal K(\mathcal B)$ the set of cones $K\in\mathcal K$,
such that $(v+L_G)\cap K\ne\emptyset$.
Then

[\textbf{i}]\ the set is independent of the choice of $v\in\mathcal B$.

[\textbf{ii}]\ the set $\omega_G^{-1}\left(\bigcup_{K\in\mathcal K(\mathcal B)}\T_K\right)$ is a union
of a finite set of shifts of $n$-dimensional lattices
\begin{equation}\label{eqZ(B)}
  \mathcal Z(\mathcal B)=\{\Z_{1,\mathcal B},\ldots,\Z_{N_\mathcal B,\mathcal B}\}
\end{equation}
 in $\im\C^n$; см. \textbf{(1)}.
\par\smallskip
\textbf{(4)}
Let $T(M;\mathcal B)=\{t_{K,\tau}\in T(M)\colon\:K\in\mathcal K(\mathcal B)\}$ and
$$U_{R,\mathcal B}=\{g\in\T_G\colon\:{\rm Re}\log g\in V\setminus \mathcal D(L_G,\mathcal K)_R\}.$$
If $R$ is large enough, then for $g\in U_{R,\mathcal B}$ the following is true

[\textbf{i}]\ if $t_{K,\tau}\in T(M;\mathcal B)$,
then the intersection $g\omega_G(\C^n)\cap t_{K,\tau}$ is transversal and consists of a single point,
else $g\omega_G(\C^n)\cap t_{K,\tau}=\emptyset$.

[\textbf{ii}]\ \ $(g\omega_G)^{-1}\left(\bigcup_{t_{K,\tau}\in T(M;\mathcal B)}\:t_{K,\tau}\right)=
\bigcup_{1\leq i\leq N_\mathcal B}\: (z_i(g)+\Z_{i,\mathcal B})$,
where the functions $z_i\colon U_{R,\mathcal B}\to\C^n$ are continuous.
\par\smallskip
Now applying Theorem \ref{thmApprox},
we get that \EAS\ $(g\omega)^{-1}(M)$ is
a small perturbation of the union of shifted lattices from [\textbf{(4)}, \textbf{ii}].
The first assertion of the theorem \ref{thmq1} is proved.
The second statement is that the $n$-density of the union of lattices from (\ref{eqZ(B)}) independent of the connected component $\mathcal B$.

The first statement of the theorem is true for any
(including rational) subspace $L_G\subset\re\mathcal T_G$
(the irrationality property of the space $L_G$ was not used in the proof).
For a rational $ L_G $,
the second statement is equivalent to the balance condition
for the weights of a fan $\mathcal K$; see Section \ref{pullBack}.
Now the second statement follows from the continuous dependence of the density of the union of the lattices
from the set $\mathcal Z(\mathcal B)$ from the mapping of standart winding $\omega_G$.
\subsection{Proof of Theorem \ref{thmq2}}
Let $\mathcal P,\mathcal Q$ be tropicalizations of the models $P,Q$
of equidimensional \EAS s $X,Y$.
Recall that (this is proved in tropical geometry)
there is \rfm\ $B_\mathfrak I\subset V$ with a rational base $\mathfrak I$ (see the definition \ref{dfdRelFull}),
such that for sufficiently large $R$ the following is true:
if ${\rm Re}\log g\in B_\mathfrak I^R$,
 then the variety $P\cap gQ$ is equidimensional,
% its codimension is equal to $ \ codim P + \ codim Q $,
 and its tropicalization is equal to $\mathcal P\cdot\mathcal Q$.
Therefore, if
\begin{equation}\label{eqnuladno}
  {\rm Re}\log \omega_G(z)\in B_\mathfrak I^R,
\end{equation}
then \EAS\ $\,(z+X)\cap Y$ is equidimensional
and $\left(X\cdot Y\right)^{\rm trop}=X^{\rm trop}\cdot  Y^{\rm trop}$.
Hence, for $\codima X+\codima Y=n $ we get that for all such $z$ the weak density $d_w((z + X)\cap Y)$ is constant.

Consider a set subspaces $\mathcal I=\{I\subset\R^n\colon\:I=s_G^{-1}(J),\:J\in\mathfrak I\}.$
These subspaces are proper because the standard winding $\omega_G(\C^n)$ is everywhere dense.
Therefore, $B_\mathcal I\:$ is \rfm\ with base $\mathcal I$
and for all $z\in B_\mathcal I^R$
the condition (\ref{eqnuladno}) is satisfied.
The theorem is proved.
\par\medskip
\emph{Proof of Corollary} \ref{corthmq2}.
From tropical algebraic geometry it’s known that there exists an algebraic hypersurface $M\subset\T_G$,
such that for any $g\notin M$
the tropicalization of the variety $gP\cap Q$ is equal to $\mathcal P\cdot\mathcal Q$.
Using Theorem \ref{thmq2},
we obtain the statement for the exponential hypersurface $Z=\omega_G^{-1}M\subset\C^n$.
\section{Brief overview of tropical geometry essentials}
\label{pullBack}
Here we give a brief summary of basic tropical notions and,  then,
a description of  the construction for a pull back $s^*\mathcal K$
of tropical fan $\mathcal K\subset U$
with respect to the linear operator $s\colon V\to U$,
where $V$ is a vector space with the fixed orientation.
The properties of pull back mapping apply to the proof of our main results about \EAS s;  see Section \ref{EG}.
We start with several equivalent definitions of tropical variety.
\subsection{Tropical varieties}\label{tropVar}
\subsubsection{Definition of tropical variety}
Let $\mathcal K$ be a fan of cones in $N$-dimensional
vector space $V$, $\dim\mathcal K=k$.
%The fan $\mathcal K$ is said to be \emph{equidimensional}
%if any cone $L\in\mathcal K$ is contained in some
%$k$-dimensional cone $K\in\mathcal K$.
For $K\in\mathcal K$ we denote by $V_K\subset V$ the subspace generated by the cone $K$.
The function $K\mapsto W(K)\in\bigwedge^qV^*$ on the set of oriented $p$-dimensional
cones in $\mathcal K$
is called a $p$-chain of degree $q$ if $W(K)$
changes its sign when the orientation of the cone $K$ changes.
As usual, we define a $(p-1)$-chain $d W$,
called the boundary of the $p$-chain $W$.
A chain $W$ is said to be closed if $d W = 0$.
\begin{definition}\label{dfWeightOld}
A $k$-dimensional fan $\mathcal K$ with a closed $k$-chain
of degree $N-k$ is said to be tropical if $V_K\subset\ker W (K)$
for any $k$-dimensional cone $K\in\mathcal K$, i.e.
$W(K)(v_1\wedge\ldots\wedge v_{N-k}) = 0$ for any $v_1\in V_K$.
 We call $W(K)$ the weight of the cone $K$.
 \end{definition}
Note that $W(K)$ can be seen as an even volume form in the space $V/V_K$.

Any partition of the tropical fan $\mathcal K$
with the weights inherited from $\mathcal K$
is also a tropical fan.
Two tropical fans are called equivalent,
if they have a common tropical partition.
The equivalence class of tropical fans is called
\emph{the tropical variety}.
For example, all tropical fans of dimension $N$ are equivalent.
\subsubsection{Euclidean and rational tropical varieties}
Let a Euclidean metric or an integer lattice be given in the space $V$.
Then we can consider the weight  $W$
as a numerical function $w$ on the set of cones
in the following way.
For a $k$-dimensional cone $K\in\mathcal K$ consider in the quotient space $V/V_K$
the corresponding quotient metric or quotient lattice.
Let $\Pi\subset V/V_K$ be a unit cube of the quotient metric
or a fundamental parallelotope of the quotient lattice.
We set $w(K)=W(K)(\xi_1\wedge\ldots\wedge\xi_k)$,
where $\xi_i$ are the sides of $\Pi$.
\begin{definition}\label{dfEuQ}
In the first and second cases we will talk
respectively about Euclidean and rational tropical fans
and tropical varieties.
Further we suppose,
that the weights of rational fans are rational.
\end{definition}
Note that when choosing a metric in the space $ V $
any tropical variety becomes Euclidean.

From the approximation theorem
(see Theorem \ref{thmApprox} in Section \ref{proofs})
it follows that to any $k$-dimensional algebraic subvariety $M$ of the torus $\T$ there corresponds a $k$-dimensional rational tropical fan in the space  $V=\re\mathcal T$.
The corresponding tropical variety is called \emph{the tropicalization} of $M$.
\subsubsection{Ring of tropical varieties}
Tropical varieties form a commutative graded ring;
see \cite{K03}.
The following two tropical theorems are conveniently formulated in the language of Euclidean and rational varieties.
Denote these algebras by ${\mathbb E}(V)$ and by $\Q(V)$ respectively.
Euclidean or rational fan of degree $N$,
those. point $0$ with real or rational weight,
we identify respectively with $\R$ or $\Q$.
Thus,  multiplication operation sets the pairings
$$
\mathcal P_k\colon{\mathbb E}_k(V)\times{\mathbb E}_{N-k}(V)\to\R,\,\,\,
\mathcal Q_k\colon\Q_k(V)\times\Q_{N-k}(V)\to\Q
$$
\begin{theorem}\label{thmPair}
The pairings $\mathcal P_k,\,\mathcal Q_k$ are non-degenerate.
\end{theorem}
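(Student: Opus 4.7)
The plan is to reduce the non-degeneracy of $\mathcal{P}_k$ and $\mathcal{Q}_k$ to that of the mixed-volume pairing on the polytope rings of Definition \ref{dfPolRing}, where non-degeneracy is hard-wired into the construction of the ideal $J$. Taking $E=V^*$ in Theorem \ref{thmqIsoPol} (i) and (ii), the map $\Delta\mapsto\mathcal K_{\Delta,1}$ furnishes graded ring isomorphisms $\varphi\colon{\rm Pol}(V^*;V^*)\to{\mathbb E}(V)$ and $\varphi_\Q\colon{\rm Pol}(V^*;\Z^N)\otimes\Q\to\Q(V)$.

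The first step is to identify the tropical pairing with the mixed-volume pairing under $\varphi$. The top-degree components ${\rm Pol}_N(V^*;V^*)\cong\mathcal{S}_N(H)/\ker I$ and ${\mathbb E}_N(V)$ are both one-dimensional over $\R$, trivialized respectively by the mixed-volume functional $I$ and by the weight of the zero cone. Since $\varphi$ is a graded ring isomorphism, it sends top degree to top degree, and by one-dimensionality its action there is a nonzero scalar in these coordinates. Hence under $\varphi$ the pairing $\mathcal{P}_k(X,Y)$ equals, up to a nonzero constant, $(s,t)\mapsto I(s\cdot t)$ on ${\rm Pol}_k\times{\rm Pol}_{N-k}$; the same applies over $\Q$ for $\varphi_\Q$ and $\mathcal{Q}_k$.

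The second step is then purely formal. Clause 3 in the definition of $J$ states that any $s\in\mathcal{S}_k(H)$ with $s\cdot\mathcal{S}_{N-k}(H)\subset\ker I$ belongs to $J$. Contrapositively, every nonzero class $s\in{\rm Pol}_k$ admits some $t\in{\rm Pol}_{N-k}$ with $I(s\cdot t)\ne 0$, giving non-degeneracy of the mixed-volume pairing in the left slot; commutativity of $\mathcal{S}(H)$ together with clause 3 applied to $N-k$ in place of $k$ yields non-degeneracy on the right. Transporting through $\varphi$ (respectively $\varphi_\Q$) therefore produces the asserted non-degeneracy of $\mathcal{P}_k$ and $\mathcal{Q}_k$.

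The only substantive geometric input is the appeal to Theorem \ref{thmqIsoPol} itself, which is already established in \cite{EKK20}; beyond that, the main point requiring care is the matching of scalarizations in the first step, and this is forced by the fact that $\varphi$ is a ring isomorphism between two one-dimensional top-degree spaces together with the observation that $I(\Delta^{\cdot N})=N!\,\mathrm{vol}(\Delta)\ne 0$ for a full-dimensional $\Delta$, so the scalar is indeed nonzero. I do not anticipate a serious obstacle beyond this bookkeeping: Theorem \ref{thmPair} is essentially the abstract shadow of the isomorphism of Theorem \ref{thmqIsoPol}, together with the very definition of the polytope ring.
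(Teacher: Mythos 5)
The paper itself gives no proof of Theorem \ref{thmPair}: it is recalled without proof in the ``Brief overview of tropical geometry essentials'' as a known fact about the ring of tropical varieties, with the surrounding material attributed to \cite{K03}. Your proposal must therefore stand on its own logic, and there it is circular. You delegate the entire geometric content to Theorem \ref{thmqIsoPol}(i)--(ii), i.e.\ to the assertion that $\Delta\mapsto\mathcal K_{\Delta,1}$ induces a ring isomorphism ${\rm Pol}(V^*;V^*)\to{\mathbb E}(V)$. But to see that this map is even well defined on the quotient $\mathcal S(H)/J$ one must check that the clause-3 generators of $J$ map to zero: if $s\in\mathcal S_k(H)$ satisfies $s\cdot\mathcal S_{N-k}(H)\subset\ker I$, then its image $\varphi(s)\in{\mathbb E}_k(V)$ pairs to zero with every product $\mathcal K_{\Delta_1,1}\cdots\mathcal K_{\Delta_{N-k},1}$; by Theorem \ref{thmSpan} these products span ${\mathbb E}_{N-k}(V)$, so concluding $\varphi(s)=0$ is precisely the non-degeneracy of $\mathcal P_k$ that you set out to prove (and injectivity of $\varphi$ raises the same issue). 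The paper's own logical order confirms this: in the proof of Corollary \ref{corMainG2} the non-degeneracy of the polytope-ring pairing is \emph{derived from} Theorem \ref{thmPair}, not the other way around. So unless you can point to a proof of Theorem \ref{thmqIsoPol}(i)--(ii) that does not pass through the tropical pairing, your argument assumes what it proves.

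The purely algebraic portions of your write-up are correct and worth keeping: the degree-$k$ part of $J$ is exactly the annihilator $\{s\colon s\cdot\mathcal S_{N-k}(H)\subset\ker I\}$ (the ideal generated by the three clauses cannot exceed the annihilators, since multiplying an annihilator of degree $k'$ by $\mathcal S_{k-k'}(H)$ again yields annihilators), so the mixed-volume pairing on ${\rm Pol}$ is non-degenerate by construction; and a graded ring isomorphism between algebras with one-dimensional top components does transport non-degenerate pairings onto non-degenerate pairings up to a nonzero scalar. What is missing is the actual tropical input: one must show directly, for a nonzero tropical $k$-fan, how to produce a complementary $(N-k)$-fan with nonzero product, which is the substance of the references \cite{K03}, \cite{EKK20} and cannot be recovered from the formal properties of ${\rm Pol}$ alone.
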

Recall that to any convex polyhedron $\Delta$ there corresponds a fan of cones $\mathcal K_{\Delta,k}$,
consisting of cones,
dual to faces of $\Delta$ of dimension $\leq k$.
We supply cones of codimension $k$
weights equal areas of dual faces.
The fan $\mathcal K_{\Delta,k}$ is a Euclidean tropical fan. If the vertices $\Delta$ are integer,
and the face areas are measured using an integer lattice,
then $\mathcal K_{\Delta, k}$ is a rational tropical fan.
\begin{theorem}[see \cite{K03}]\label{thmSpan}
The algebras ${\mathbb E}(V)$, $\Q(V)$ are generated by elements
degrees $1$ of the form $\mathcal K_{\Delta, 1}$.
\end{theorem}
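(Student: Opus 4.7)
The plan is to combine Minkowski's classical reconstruction theorem for convex polytopes (which handles degree $1$) with an induction on codimension that exploits the non-degeneracy of the pairing $\mathcal{P}_k$ (Theorem \ref{thmPair}).

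First I would handle the degree-$1$ case. A tropical variety of degree $1$ is represented by an $(N-1)$-dimensional fan with top-dimensional weights satisfying the balance condition at each $(N-2)$-dimensional cone. Passing to a common refinement, the balance condition is precisely the Minkowski relation on facet normals: the space of solutions is spanned by the weight systems arising from the normal fans $\mathcal{K}_{\Delta,1}$ of convex polytopes $\Delta$. Allowing formal differences of polytopes (virtual polytopes), every balanced weighting is realized by an integer combination of $\mathcal{K}_{\Delta,1}$, which settles the claim in degree $1$ for both $\mathbb{E}(V)$ and $\mathbb{Q}(V)$ (using arbitrary, resp.\ rational, polytopes).

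Next I would proceed by induction on the degree $k$. Let $\mathcal{A}\subset\mathbb{E}(V)$ be the subalgebra generated by the $\mathcal{K}_{\Delta,1}$, and suppose $\mathcal{A}_j=\mathbb{E}_j(V)$ for $j<k$. By Theorem \ref{thmPair}, the pairing $\mathcal{P}_k\colon\mathbb{E}_k(V)\times\mathbb{E}_{N-k}(V)\to\mathbb{R}$ is perfect, so any element $\mathcal{T}\in\mathbb{E}_k(V)$ is determined by the linear functional $\mathcal{L}\mapsto\mathcal{T}\cdot\mathcal{L}$. By the inductive hypothesis this functional is already known on a spanning set of $\mathbb{E}_{N-k}(V)$, namely on products $\mathcal{K}_{\Delta_1,1}\cdots\mathcal{K}_{\Delta_{N-k},1}$, and the values $\mathcal{T}\cdot\mathcal{K}_{\Delta_1,1}\cdots\mathcal{K}_{\Delta_{N-k},1}$ depend symmetrically and multilinearly on the $\Delta_i$. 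It remains to exhibit an element $\mathcal{T}'\in\mathcal{A}_k$ producing the same functional; by non-degeneracy this will force $\mathcal{T}=\mathcal{T}'\in\mathcal{A}_k$.

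The main obstacle, and the crux of the argument, is precisely this last surjectivity: one must show that the iterated multiplication $\mathrm{Sym}^k(\mathcal{A}_1)\twoheadrightarrow\mathbb{E}_k(V)$ is surjective, i.e.\ that every tropical variety of codimension $k$ can be matched by some $\mathbb{Z}$-linear combination of $k$-fold products of fans $\mathcal{K}_{\Delta,1}$. This is the content of the Pukhlikov--Khovanskii presentation of the polytope algebra and is what makes the isomorphism of Theorem \ref{thmqIsoPol}(i) work: products of degree-$1$ generators compute mixed volumes by the tropical BKK identity (cf.\ Theorem \ref{thmBKK}), and Minkowski's theorem supplies enough polytopes so that the image of $\mathrm{Sym}^k$ separates all elements of $\mathbb{E}_{N-k}(V)$ under $\mathcal{P}_k$. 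In other words, the cleanest route is to identify $\mathbb{E}(V)$ with the symmetric-algebra quotient $\mathrm{Pol}(V^{*};V^{*})$, which is generated in degree $1$ by construction; the rational case $\mathbb{Q}(V)\cong\mathrm{Pol}(V^{*};\mathbb{Z}^{N})\otimes\mathbb{Q}$ follows in the same manner by restricting to lattice polytopes.
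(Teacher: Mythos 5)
The paper itself offers no proof of Theorem \ref{thmSpan}: it is stated with the attribution ``see \cite{K03}'', so the only fair comparison is between your sketch and the actual argument in that reference (and in \cite{EKK20}). Your degree-$1$ step is fine: a balanced $(N-1)$-dimensional weighted fan is, after refinement, exactly a Minkowski facet datum, hence realized by a virtual polytope, i.e.\ by a difference $\mathcal K_{\Delta,1}-\mathcal K_{\Delta',1}$. The problem is everything after that.

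There are two genuine gaps. First, your induction is not well-founded: to control an element $\mathcal T\in\mathbb E_k(V)$ you pair it against $\mathbb E_{N-k}(V)$ and invoke ``the inductive hypothesis'' to span $\mathbb E_{N-k}(V)$ by products of degree-$1$ generators, but for $k\le N-k$ that degree is not covered by an induction on $k$ that has only reached $k-1$. Second, and more seriously, the duality argument cannot close even if the spanning were available: non-degeneracy of $\mathcal P_k$ (Theorem \ref{thmPair}) gives an \emph{injection} $\mathbb E_k(V)\hookrightarrow \mathbb E_{N-k}(V)^{*}$, so knowing the values $\mathcal T\cdot\mathcal K_{\Delta_1,1}\cdots\mathcal K_{\Delta_{N-k},1}$ determines $\mathcal T$ uniquely, but it does not produce an element $\mathcal T'$ of the subalgebra $\mathcal A_k$ with the same pairing values; if $\mathcal A_k\subsetneq\mathbb E_k(V)$ there is no contradiction. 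You correctly identify this surjectivity of $\mathrm{Sym}^k(\mathcal A_1)\to\mathbb E_k(V)$ as ``the crux,'' but you then discharge it by appealing to the isomorphism $\mathrm{Pol}(V^{*};V^{*})\cong\mathbb E(V)$ of Theorem \ref{thmqIsoPol}(i) --- whose surjectivity part is literally the statement being proved (and which the paper also only cites from \cite{EKK20}). So the argument is circular exactly at the step that carries all the content. The missing ingredient is a constructive decomposition in intermediate codimensions, e.g.\ refining a given tropical fan to a subfan of the normal fan of a simple polytope and using the presentation of the ring of Minkowski weights of the corresponding complete toric variety (equivalently, McMullen's polytope algebra) to write the given weight as a combination of $k$-fold products of divisorial classes $\mathcal K_{\Delta,1}$; nothing in your sketch substitutes for that construction.
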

\subsection{Pull backs of tropical varieties}\label{pullBacks}
Further we assume
that the orientation of the kernel of a linear operator $s\colon V\to U$ is fixed.
If $s$ is surjective,  then the set
$s^{-1}\mathcal K=\{s^{-1}K\colon K\in\mathcal K\}$
form a fan of cones in the space $V$.
Let $W$ be the weight chain on the fan $\mathcal K$.
We agree orientation of any subspace $E\subset U$
with the orientation of the subspace $s^{-1}E\subset V$.
The map $s$ gives an isomorphism of the quotient spaces $U/U_K$ and $V/V_{s ^ {-1} K}$.
Therefore, the mentioned agreement allows considering
$s^*\left(W (K)\right)$  as the weight of the cone $s^{-1}K$.
For the surjective operator $s$,
denote by $s^*\mathcal K$
the fan of cones
$s^{-1}K$ with weights $s^*\left (W (K) \right)$.
\begin{definition}\label{dfBackSur}
If $s$ is surjective  then the tropical fan $s^*\mathcal K$
call a pull back of tropical fan $\mathcal K$.
\end{definition}
\noindent
Let $s$ be injective.
We identify its image $s(V)$ with the subspace $V\subset U$.
Consider $V$ as a tropical fan in the space $U$
with a single cone and any nonzero weight $T(V)$.
Then $\supp(\mathcal K\cdot V)\subset V$.
Let $L$ be the cone of maximum dimension of the fan $\mathcal K\cdot V$.
Then the weight of the cone $L$ is equal to $T(V)\wedge W(L)$,
where $W(L)$ is the uniquely defined volume form in the space $V/V_L$.
The cones of $\mathcal K\cdot V$ form a fan of cones $(\mathcal K\cdot V)_s\subset V$.
We equip this fan with the weight chain $W$ and consider it as a tropical fan in $V$.
\begin{definition}\label{dfBackInj}
If $s$ is injective  then we denote the tropical fan
$(\mathcal K\cdot V)_s\subset V$ by $s^*\mathcal K$
and call it the pull back of $\mathcal K$.
\end{definition}
\noindent
Define the pull back of tropical fan with respect to any linear operator 
$s\colon V\to U$ as follows.
Represent $s$ in the form $s=s_{\rm inj}\cdot s_{\rm surj}$, where the operator $s_{\rm surj}\colon V\to s(V) $ is surjective,  and the operator $s_ {\rm inj}\colon s(V)\to U$ is injective.
\begin{definition}\label{dfBack}
We put $s^*\mathcal K=(s^*_{\rm surj}\cdot s^*_{\rm inj})\: \mathcal K$.
\end{definition}
The main result on the pull backs of tropical varieties is as follows.
%(в формулировке последнего утверждения теоремы используется приведенное ниже определение \ref{dfKDelta}).
%
\begin{theorem}\label{thmBack}
{\rm(1)} For any $s\colon V\to U$ the pull back mapping
$\mathcal K\mapsto s^*\mathcal K$
is a ring isomorphism $s^*\colon\mathcal V(U)\to\mathcal V(V)$.

{\rm(2)} If $s=s_1\cdot s_2$ then $s^*=s_2^*\cdot s_1^*$.
%
%{\rm(3)} Если оператор $A$ инъективен,
%то гомоморфизм $A^*$ сюръективен.

{\rm(3)} Let $s'\colon U^*\to V^*$ be an operator adjoint to $s$,
and let $\Delta\subset U^*$ be a convex polyhedron.
Then $s^*\mathcal K_{\Delta,k}=\mathcal K_{s'\Delta,k}$.
\end{theorem}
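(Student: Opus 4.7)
The plan is to treat parts (1)--(3) in an interlocking way, using Theorem \ref{thmSpan} to reduce nearly everything to behaviour on the polytopal generators $\mathcal K_{\Delta,1}$. First I would verify that $s^*$, as defined separately on surjective and injective maps in Definitions \ref{dfBackSur}--\ref{dfBackInj}, is well-defined on equivalence classes of tropical fans and is a graded ring homomorphism in each case. Then I would check that the canonical factorisation $s = s_{\rm inj}\cdot s_{\rm surj}$ through $s(V)$ yields a consistent pullback for arbitrary $s$. Finally, (2) and (3) reduce via Theorem \ref{thmSpan} to combinatorial statements about normal fans of polytopes.

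For the surjective case, well-definedness requires verifying that the balance condition $V_K\subset\ker W(K)$ is preserved under $K\mapsto s^{-1}K$ and that closedness of the weight chain is inherited. Both follow from the canonical identifications $U/U_K\cong V/V_{s^{-1}K}$ induced by $s$, with orientations fixed by the chosen orientation of $\ker s$. For the injective case, well-definedness is essentially automatic since $\mathcal K\cdot V$ is already a tropical cycle in $U$ supported on $V$, whose restriction to $V$ yields a tropical fan after dividing out the auxiliary weight $T(V)$. Additivity is immediate in both cases; multiplicativity follows from associativity of the tropical product, combined with $s^{-1}(K\cap K') = s^{-1}K\cap s^{-1}K'$ in the surjective case, and with the idempotency-like behaviour of intersection with $V$ in the injective case.

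For part (3), the calculation is direct: if $K_\Lambda$ is the normal cone to a face $\Lambda\subset\Delta$, then $s^{-1}K_\Lambda$ is the normal cone to the face $s'\Lambda\subset s'\Delta$, and $s'$ transports $k$-dimensional volume forms on faces of $\Delta$ to the corresponding forms on faces of $s'\Delta$ via the fixed orientation of $\ker s$. Part (2) then follows on the generators, since both $s_2^*\cdot s_1^*\mathcal K_{\Delta,k}$ and $(s_1 s_2)^*\mathcal K_{\Delta,k}$ equal $\mathcal K_{s_2's_1'\Delta,\,k}$; Theorem \ref{thmSpan} extends it to the full algebras. For the isomorphism claim in (1), injectivity of $s^*$ follows from non-degeneracy of the tropical pairing (Theorem \ref{thmPair}) together with the ring homomorphism property: if $s^*\mathcal K = 0$, then $(s^*\mathcal K)\cdot(s^*\mathcal M) = s^*(\mathcal K\cdot\mathcal M) = 0$ for all $\mathcal M$, forcing $\mathcal K$ to be trivial after transporting the pairing.

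The main obstacle will be the injective case, and in particular the consistency of the orientation and weight conventions as they propagate through the factorisation $s = s_{\rm inj}\cdot s_{\rm surj}$. The definition of $s^*_{\rm inj}\mathcal K$ depends on an auxiliary nonzero weight $T(V)$, and one must verify that the resulting fan in $V$ is canonical --- precisely what the fixed orientation of $\ker s$ encodes. The same orientation bookkeeping is what makes functoriality (2) work cleanly when $s_1$ and $s_2$ are of different types. Once these compatibilities are settled, the remaining claims become standard polytopal combinatorics via Theorem \ref{thmSpan}.
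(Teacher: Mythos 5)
Your reduction of (2) to the generators $\mathcal K_{\Delta,k}$ via Theorem \ref{thmSpan}, and your treatment of (3) in the surjective case, coincide with the paper's argument. But there are genuine gaps at exactly the points where the real work happens. First, in the injective case of (3) the computation is not ``direct'': for injective $s$ the adjoint $s'$ is a \emph{projection} of polytopes, the cones $s^{-1}K_\Lambda=K_\Lambda\cap V$ are not in bijection with faces of $\Delta$, and the weight of a cone of $\mathcal K_{s'\Delta,k}$ is the $k$-volume of a face of the projected polytope, which is not obtained by transporting a volume form from a single face of $\Delta$. The paper isolates precisely this identity as Proposition \ref{prKDeltaProduct2}, and its proof requires the tropical BKK theorem to identify the weights of the stable intersection $\mathcal K_{\Delta,k}\cdot V$ (which are mixed volumes) with volumes of faces of the projection. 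Your sketch omits this step entirely.

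Second, your route to the ring-homomorphism property in (1) does not work as described. Multiplicativity of $s^*$ cannot be extracted from $s^{-1}(K\cap K')=s^{-1}K\cap s^{-1}K'$, because the tropical product is a stable intersection whose weights are computed by a displacement rule, not a set-theoretic intersection of cones; and the ``idempotency-like behaviour of intersection with $V$'' is false: for a proper subspace $V$ of $U$ the stable self-intersection $V\cdot V$ vanishes (generic translates of $V$ are disjoint), so $(\mathcal K\cdot V)\cdot(\mathcal M\cdot V)$ computed in $U$ does not reproduce $(\mathcal K\cdot\mathcal M)\cdot V$; the product of the restrictions must be computed intrinsically in $V$, and comparing the two is essentially the statement to be proved. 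The paper sidesteps any direct verification: having proved (3), it transports the ring structure through the polytope algebra ${\rm Pol}(U^*,U^*)\cong\mathcal V(U)$ of Theorem \ref{thmqIsoPol} and shows (Theorem \ref{thmqPolCov}) that the symmetric-algebra map $\mathcal S(s)$ preserves the defining ideal $J$, so multiplicativity is inherited from the symmetric algebra. Finally, your injectivity argument via the pairing of Theorem \ref{thmPair} attacks a claim that fails whenever $\dim V<\dim U$: the pullback annihilates every class of codimension exceeding $\dim V$, so $s^*$ is in general only a homomorphism --- which is all the paper's proof establishes and all that is used elsewhere in the text.
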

Let $E\subset U$,
$E^*=U^*/E^\bot$,
where $E^\bot$ is an orthogonal complement of $E$.
Denote by $\Delta\subset U^*$ and $\pi\Delta$ respectively a convex polyhedron in $U^*$
and its image under the projection $\pi\colon U^*\to E^*$.
Now choose the Euclidean metric in $U$ and
consider the subspace $E$ as a Euclidean tropical fan
consisting of a single cone with a weight of $1$.
\begin{proposition}\label{prKDeltaProduct2}
Let $k\leq \dim E$.
Then
$\mathcal K_{\pi\Delta,k}=\iota^*\mathcal K_{\Delta,k}$,
where $\iota$ is an operator of embedding $E$ into $V$.
\end{proposition}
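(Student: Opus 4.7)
The cleanest plan is to recognize this proposition as the injective case of Theorem~\ref{thmBack}(3), applied with $s = \iota \colon E \hookrightarrow U$. The plan then reduces to identifying the adjoint operator $\iota'\colon U^* \to E^*$ with the projection $\pi$, after which Theorem~\ref{thmBack}(3) gives $\iota^*\mathcal K_{\Delta,k} = \mathcal K_{\iota'\Delta,k} = \mathcal K_{\pi\Delta,k}$ immediately.

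For the identification $\iota' = \pi$, for any $\lambda \in U^*$ and $\xi \in E$ the adjoint satisfies $(\iota'\lambda)(\xi) = \lambda(\iota\xi) = \lambda(\xi)$, so $\iota'\lambda$ is nothing but the restriction of $\lambda$ to $E$. Since this restriction has kernel exactly $E^\perp$, it factors as the canonical projection $\pi\colon U^* \to U^*/E^\perp = E^*$. One should also check that the Euclidean structure used to define $\mathcal K_{\pi\Delta,k}$ (namely the quotient metric on $U^*/E^\perp$) is the one implicit in Theorem~\ref{thmBack}(3); this is routine.

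If instead one wishes to prove the proposition directly from Definition~\ref{dfBackInj}, as a lemma feeding into Theorem~\ref{thmBack}(3), the plan is to unpack $\iota^*\mathcal K_{\Delta,k} = (\mathcal K_{\Delta,k} \cdot E)_\iota$ with $E$ regarded as a top-dimensional Euclidean tropical cone in $U$ of weight $T(E)=1$. Set-theoretically the supports match up as follows: for a $k$-face $\Lambda$ of $\Delta$ on whose affine hull $\pi$ is injective, the intersection $K_\Lambda \cap E$ coincides with the dual cone $K_{\pi\Lambda} \subset E$, because for $\xi \in E$ one has $\xi(\lambda) = (\pi\lambda)(\xi)$, so maximization on $\Lambda$ and on $\pi\Lambda$ agree. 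For each $k$-face $\Lambda'$ of $\pi\Delta$, the weight of $K_{\Lambda'}$ in the pull-back is then extracted from the stable tropical intersection: a generic perturbation of $E$ selects exactly one $k$-face $\Lambda$ of the preimage polytope $\pi^{-1}(\Lambda') \cap \Delta$ with $\pi\Lambda = \Lambda'$, and its contribution is $\vol_k(\Lambda)$ times the Jacobian of $\pi$ restricted to the affine hull of $\Lambda$, which equals $\vol_k(\pi\Lambda) = \vol_k(\Lambda')$.

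The main obstacle on the direct route is showing that this computation is independent of the perturbation: different perturbations pick different $\Lambda$, but each must give the same weight $\vol_k(\Lambda')$. This independence follows from the orthogonal-projection formula $\vol_k(\Lambda) \cdot \text{Jac} = \vol_k(\pi\Lambda) = \vol_k(\Lambda')$, valid for every $\Lambda$ in the preimage polytope with $\pi\Lambda = \Lambda'$; one also uses the tropical balancing of $\mathcal K_{\Delta,k}$ to rule out spurious contributions from non-transverse intersections with the perturbed $E$. Appealing to Theorem~\ref{thmBack}(3) bypasses all of this and reduces the proof to the one-line adjoint calculation above.
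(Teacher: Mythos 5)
Your primary route is circular in the logical order of this paper. Proposition \ref{prKDeltaProduct2} is not a consequence of Theorem \ref{thmBack}(3): it \emph{is} the injective case of that statement (the adjoint of the embedding $\iota$ is the projection $\pi$, exactly as you compute), and the paper's proof of Theorem \ref{thmBack}(3) disposes of the injective case precisely by citing this proposition. So the proposition must be established independently, directly from Definition \ref{dfBackInj}, before Theorem \ref{thmBack}(3) is available; your adjoint calculation only shows that the two statements are equivalent, not that either one holds.

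Your fallback direct argument has the right shape but the multiplicity accounting is wrong. In the stable intersection $\mathcal K_{\Delta,k}\cdot E$, a generic translate of $E$ does \emph{not} in general meet the dual cone of a single $k$-face $\Lambda$ with $\pi\Lambda=\Lambda'$: already for $\Delta$ a hexagon in $U^*$ with $\dim U=2$, $E$ a line and $k=1$, no edge of $\Delta$ projects onto the whole segment $\pi\Delta$, and a generic translate of $E$ crosses the dual rays of three of the six edges. The correct statement is a Cauchy-type projection formula: the translate meets the dual cones of all $k$-faces on the ``upper boundary'' of $F=\pi^{-1}(\Lambda')\cap\Delta$ relative to the direction of translation, each such face contributes the volume of its own projection, and these contributions \emph{sum} to $\vol_k(\Lambda')$; balancing guarantees that the sum (not the individual terms) is independent of the translate. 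This summation identity is exactly the content of the tropical BKK theorem, which is what the paper invokes for $k=\dim E$, after which the case $k<\dim E$ is reduced to $k=\dim E$ by localization of tropical varieties. As written, your claim that one selected face contributes the full weight $\vol_k(\Lambda')$ is false, so the direct route needs to be repaired along these lines before it proves the proposition.
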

%
%\noindent
\begin{proof}
By definition \ref{dfBackInj} we have $\iota^*\mathcal K_{\Delta,k}=(\mathcal K_{\Delta,k}\cdot E)_\iota$.
If $k=\dim E=m$, then the statement follows from the tropical theorem BKK (see \cite{EKK20} Theorem 3.1.3)
for the system of equations
$\mathbf f_1=\ldots=\mathbf f_m=\mathbf g_1=\ldots=\mathbf g_{N-m}=0,$
where $\mathbf f_i$ are tropical polynomials with the common Newton polyhedron $\Delta$,
and $\mathbf g_1=\ldots=\mathbf g_{N-m}=0$ is a system of tropical equations of subspace $E$,
consisting of polynomials with the common Newton polyhedron of area $1$.
For $k<m$, the using of localization  of tropical varieties (see \cite{K03})
reduces the statement to the case $k=m$.
\end{proof}
%Отметим также,
%что подобное утверждение верно и для гладких выпуклых тел;
%см. \cite{AK18}, теорема 2 и следствие 2.1.
Now we turn to the proof of Theorem \ref{thmBack}.

First, we prove statement (3).
If the operator $s$ is surjective,
then the adjoint operator $s'$ is injective and
the polyhedron $s'\Delta$ lies in the subspace $s'(U^*)\subset V^*$.
In this case,
by the definition of the fan $\mathcal K_{\Delta,K}$,
statement (3) follows from Definition \ref{dfBackSur}.
If the operator $s$ is injective,
then the adjoint operator $s'$ is surjective.
In this case,
according to the definition of \ref{dfBackInj},
statement (3) coincides  Proposition \ref{prKDeltaProduct2}
for $E=s(V)$.
If $s=s_{\rm inj}\cdot s_{\rm surj}$,
then $s'\Delta=s'_{\rm surj}s'_{\rm inj}\Delta $.
Therefore, according to Definition \ref{dfBack},
the required statement is reduced to the previous one.
Statement (3) is proved.

We pass to the proof of (2).
For tropical fans of the form $\mathcal K_{\Delta,k} $
statement (2) follows from (3).
Really,
$$
s_2^*s_1^*\mathcal K_{\Delta,k}=s_2^*\mathcal K_{s_1'\Delta,k}=\mathcal K_{s'_2s'_1\Delta,k}=\mathcal K_{(s_1s_2)'\Delta,k}=(s_1s_2)^*\mathcal K_{\Delta,k}.
$$
According to Theorm \ref{thmSpan},
any tropical variety $\mathcal K$ of degree $k$
can be represented as a finite sum
$\alpha_i\sum_\Delta\mathcal K_{\Delta,k}$.
The pull back mapping $s^*\colon\mathcal V(U)\to\mathcal V(V)$is linear.
Therefore (2) follows from (3).
\par\smallskip
To prove (1) we use the ring of convex polyhedra  ${\rm Pol}(E;E)$; see Definition \ref{dfPolRing}
and Theorem \ref{thmqIsoPol}.
\begin{theorem}\label{thmqPolCov}
Any linear operator $s'\colon U^*\to V^*$ can be extended to a ring homomorphism
${\rm Pol}(s)\colon{\rm Pol}(U^*,U^*)\to{\rm Pol}(V^*,V^*)$.
\end{theorem}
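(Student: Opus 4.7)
The plan is to work at the symmetric-algebra level and descend. Since $s'$ is $\R$-linear, the assignment $\Delta\mapsto s'\Delta$ is additive on Minkowski sums, so it extends uniquely to an $\R$-linear map $H_{U^*}\to H_{V^*}$ of virtual polyhedra, and thence to a graded $\R$-algebra homomorphism $\hat\sigma\colon\mathcal S(H_{U^*})\to\mathcal S(H_{V^*})$. By construction $\hat\sigma$ sends the class of $\Delta\subset U^*$ to the class of $s'\Delta$; the theorem therefore reduces to the inclusion $\hat\sigma(J_{U^*})\subseteq J_{V^*}$, for then $\hat\sigma$ descends to the required ring homomorphism ${\rm Pol}(s)$ on quotients. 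Note that one cannot shortcut this via Theorem~\ref{thmBack}(1) and the isomorphism of Theorem~\ref{thmqIsoPol}(i), since statement (1) of Theorem~\ref{thmBack} is precisely what will be deduced from ${\rm Pol}(s)$ later.

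To verify the inclusion I would check each of the three families of generators of $J_{U^*}$ from Definition~\ref{dfPolRing}. Write $n=\dim U$, $m=\dim V$, and $r=\rank s'\leq\min(n,m)$. Every monomial image $s'\Delta_1\cdots s'\Delta_k$ consists of $k$ polytopes lying inside the $r$-dimensional subspace $s'(U^*)\subset V^*$. When $k>r$, the classical vanishing of mixed volume on tuples in which more than $j$ bodies lie in a common $j$-dimensional subspace places the image in $J_{V^*}$ (in family~2 if the degree exceeds $m$, in family~3 otherwise, by annihilation of $\mathcal S_{m-k}(H_{V^*})$ under $I_{V^*}$). When $k\leq r$, the mixed-volume factorization formula for polytopes concentrated in a subspace, together with the linear invariance of mixed volume under the isomorphism $s'\colon U^*\to s'(U^*)$ in the injective case $r=n$, converts the pairing-vanishing conditions defining the family-1 and residual family-3 generators of $J_{U^*}$ into the corresponding conditions for $J_{V^*}$.

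The main obstacle is the case $n<m$: family-1 generators (living in degree $n$) are sent to degree-$n$ elements of $\mathcal S(H_{V^*})$, which are below the top degree $m$ and hence not killed by $J_{V^*}$ on degree grounds alone. They must instead be shown to satisfy the family-3 condition $\hat\sigma(x)\cdot\mathcal S_{m-n}(H_{V^*})\subset\ker I_{V^*}$; via the factorization formula this splits $V_m(s'\Delta_{i,1},\ldots,s'\Delta_{i,n},\Gamma_1,\ldots,\Gamma_{m-n})$ into a product of a mixed volume in $s'(U^*)$ (a nonzero multiple of the original $V_n(\Delta_{i,1},\ldots,\Delta_{i,n})$, which vanishes by hypothesis) and a mixed volume in the quotient $V^*/s'(U^*)$. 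Assembling this computation with the rank-drop subcases $r<n$ (where the internal mixed volumes vanish for dimensional reasons) constitutes the bulk of the proof.
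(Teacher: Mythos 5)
Your overall strategy --- extend $\Delta\mapsto s'\Delta$ to the symmetric algebras and verify $\hat\sigma(J_{U^*})\subseteq J_{V^*}$ generator by generator using mixed--volume identities --- is sound, but it is genuinely different from the paper's proof. The paper does take essentially the ``shortcut'' you rule out, only it routes through part (3) of Theorem \ref{thmBack} (which is proved \emph{before} part (1)), not through part (1): it uses the isomorphism $\mathcal N(L)\colon{\rm Pol}(L,L)\to\mathcal V(L)$ of Theorem \ref{thmqIsoPol}(i) (imported from \cite{EKK20}) and the identity $s^*\mathcal K_{\Delta,k}=\mathcal K_{s'\Delta,k}$ to get a commutative square $\mathcal N(V)\circ\mathcal S(s)=s^*\circ\mathcal N(U)$, whence $\mathcal P\in J_{U^*}\Rightarrow\mathcal N(U)\mathcal P=0\Rightarrow\mathcal N(V)\mathcal S(s)\mathcal P=0\Rightarrow\mathcal S(s)\mathcal P\in J_{V^*}$. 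So there is no circularity in the paper's route; what your route buys instead is self-containedness --- it never leaves convex geometry and does not lean on the tropical isomorphism from \cite{EKK20}.

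There is, however, one step in your argument that does not go through as written. The factorization formula you invoke, $V_m(K_1,\ldots,K_j,\Gamma_1,\ldots,\Gamma_{m-j})=c\cdot v_E(K_1,\ldots,K_j)\cdot v_{V^*/E}(\pi\Gamma_1,\ldots,\pi\Gamma_{m-j})$, is valid only when the number of bodies lying in the subspace $E$ equals $\dim E$. It therefore handles the family-1 generators (where $k=n=\dim s'(U^*)$ in the injective case) and the ``too many bodies in a small subspace'' vanishing, but it says nothing about a family-3 generator of degree $k$ with $1\leq k<n$: there you must show $\sum_i c_i\,V_m(s'\Delta_{i,1},\ldots,s'\Delta_{i,k},\Gamma_1,\ldots,\Gamma_{m-k})=0$ with only $k<\dim s'(U^*)$ bodies confined to $s'(U^*)$ and the $\Gamma_j$ arbitrary bodies of $V^*$, a configuration to which no such factorization applies. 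The gap is fillable: by multilinearity and approximation reduce each $\Gamma_j$ to a segment $[0,v_j]$; then $\binom{m}{k}V_m\bigl(K_1,\ldots,K_k,[0,v_1],\ldots,[0,v_{m-k}]\bigr)=\lvert v_1\wedge\cdots\wedge v_{m-k}\rvert\cdot V_k(\rho K_1,\ldots,\rho K_k)$, where $\rho$ is the projection of $V^*$ onto a $k$-dimensional complement of ${\rm span}(v_1,\ldots,v_{m-k})$. The required vanishing then becomes $\sum_i c_i V_k(\rho s'\Delta_{i,1},\ldots,\rho s'\Delta_{i,k})=0$ for an arbitrary linear map $\rho\circ s'\colon U^*\to W$ into a $k$-dimensional space, and \emph{that} is exactly what the family-3 hypothesis in $U^*$ yields when you test it against $n-k$ segments of $U^*$ (plus a $\lvert\det\rvert$ factor, or outright vanishing when $\rho\circ s'$ fails to be surjective). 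You should add this reduction explicitly; without it the ``residual family-3'' case is unproved.
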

\begin{proof}
Recall the notation: $J(L)$ is an ideal in symmetric algebra $S(L)$ of a vector space $L$
and ${\rm Pol}(L,L)=S(L)/J(L)$
(see Definition \ref{dfPolRing}).
The operator $s'$ extends to a ring homomorphism
$\mathcal S(s)\colon\mathcal S(U^*) \to\mathcal S(V^*)$.
Thus, it remains to prove that $\mathcal S(s)(J_{U^*})\subset J_{V^*}$.

Known that the mapping $\Delta\mapsto\mathcal K_{\Delta,1} $
extends to the ring isomorphism $\mathcal N (L)\colon{\rm Pol} (L, L)\to\mathcal V (L)$;
see \cite{EKK20}.
According to Theorem \ref{thmBack} (3), the following diagram is commutative
\begin{equation}
%\[
 \begin{tikzcd}\label{cd3}
\mathcal S_k(U^*)\arrow{d}{\mathcal N(U)}\arrow{r}{\mathcal S(s)}&\mathcal S_k(V^*)\arrow{d}{\mathcal N(V)}
\\ \mathcal V(U)\arrow{r}{s^*}&\mathcal V(V)
\end{tikzcd}
%\]
\end{equation}
Let $\mathcal P\in J_{U ^ *}$.
Then, according to Theorem \ref{thmqIsoPol},
$\mathcal N(U)\:\mathcal P=0$.
The commutativity of the diagram (\ref{cd3}) implies
that $\mathcal N(V)\:\mathcal S(s) \:\mathcal P = 0$.
Again, applying Theorem \ref{thmqIsoPol} ,
we get that $\mathcal S(s)\:\mathcal P\in J_{V^*}$.
\end{proof}
Theorem \ref{thmBack} (1) follows from Theorem \ref{thmqPolCov}.
\begin{thebibliography}{References}
%\bibitem[Al03]{Al} S.Alesker.
%\emph{Hard Lefschets theorem for valuations, complex integral geometry, and unitarily invariant valuations}. J. Differential Geom. 63:1 (2003),
%63--95
%
%\bibitem[Ax71]{Ax71} J.Ax. \emph{On Schanuel's conjectures.} Annals of Mathematics,
%93(1971), 252-268
%
%\bibitem[AK18]{AK18} D.\,Akhiezer, B.\,Kazarnovskii, {\it Average number of zeros and mixed symplectic volume of Finsler sets},
%Geom. Funct. Anal., vol. 28 (2018), pp.1517--1547.

%\bibitem[Al07]{Al07} S.\,Alesker, {\it Theory of valuations on manifolds: a survey}, Geom. Funct. Anal., vol. 17 (2007), p. 1321--1341.

%\bibitem[B75]{BKK} D. N. Bernstein. \emph{The number of roots of a system of equations}. Funct. Anal. Appl. 9 (1975), 183–185

\bibitem[BMZ07]{BMZ07} E. Bombieri, D. Masser and U. Zannier.
 \emph{Anomalous Sub\-va\-rieties—Structure Theorems and Applications}.
Int. Math. Res. Notices, 2007, 33 p. doi: 10.1093/imrn/rnm057

%\bibitem[Br97]{Br97} M. Brion. \emph{The structure of polytope algebra}. Tohoku Mathematical Journal, Second Series, (49:1), 1997, 1--32
%
%\bibitem[C86]{C86}
%C. De Concini. \emph{Equivariant embeddings of homogeneous spaces}.
%Proceedings of the International Congress of Mathematicians, Berkeley, California, USA (1986), 369-377.

\bibitem[CP85]{CP85}
C.~De Concini and C.~Procesi. \textit{Complete symmetric varieties II.
Intersection theory}, Adv. Stud. Pure Math., 6 (1985), 481-512.

%\bibitem[D78]{Dan}В.И.Данилов. \textit{Геометрия торических многообразий.} --
%УМН., 1978, т.33, вып.2, c.85-134.

%\bibitem[Est12]{Est12} A. Esterov. \emph{Tropical varieties with polynomial weights and corner loci of piecewise polynomials}. Mosc. Math. J., 12:1 (2012), 55–76 (arXiv:1012.5800)

\bibitem[EKK20]{EKK20} A. Esterov, B. Kazarnovskii, A. Khovanskii.
\emph{Newton polyhedra and tropical geometry}.
To appear in Uspekhi Mat. Nauk (Russian Mathematical Surveys)

%\bibitem[FS97]{FS} Fulton, William and Bernd Sturmfels: \emph{Intersection theory on toric varieties
%In: Topology}, 36 (1997) 2, p. 335-353

%\bibitem[IMS09]{IMS09} Illia Itenberg, Grigory Mikhalkin, Eugenii Shustin (2009). \emph{Tropical algebraic geometry (2nd ed.)}. Basel: Birkhauser Basel.

%\bibitem[KK18]{Ober} B. Kazarnovskii, A. Khovanskii: \emph{Newton polyhedra, tropical geometry and the ring of condition for $(\C^*)^n$},  arXiv:1705.04248

\bibitem[K81]{K81} B. Kazarnovskii: \emph{On zeros of exponential sums}. Soviet Маth. Dоkl.
Vol.23 (1981), No.2, pp. 804-808.

\bibitem[K97]{K97}  B. Kazarnovskii: \emph{Exponential analitic sets}.
Functional Anal. Appl., (31:2), 1997, 15-26 (English translaion).

\bibitem[K03]{K03} B. Kazarnovskii: \emph{c-fans and Newton polyhedra of algebraic varieties.}
Izvestiya: Mathematics, (67:3), 2003, 23–44 (English translaion).

%\bibitem[K14]{K14} B. Kazarnovskii: \emph{Action of the complex Monge–Ampere operator
%on piecewise-linear functions and exponential tropical varieties}. Izvestiya RAN: Ser. Mat. 78:5, (2014), 53–74

%\bibitem[K14f]{K14f} Б. Я. Казарновский: \emph{О действии комплексного оператора Монжа–Ампера на кусочно линейных функциях}, Функц. анализ и его прил., 48:1 (2014), 19–29; Funct. Anal. Appl., 48:1 (2014), 15–23

%\bibitem[K20]{K20} Б. Я. Казарновский: \emph{Пересечения экспоненциальных аналитических множеств и
%кольцо условий
%аффинного пространства, \ I}, предложено в Изв. РАН, сер. мат.

%\bibitem[K17]{K17} Б. Я. Казарновский: \emph{Об умножении коциклов полиэдрального комплекса}, Изв. РАН, т. 81, № 2, 2017, стр.\ 97–128.

%\bibitem[KKMS]{Kempf} G. Kempf, F. Knudsen, D. Mumford and B. Saint-Donat: \emph{Toroidal embeddings I}, 1973, 339, Lecture Notes in Mathematics,
%Springer

%\bibitem[KKh18]{KKh18} Kiumars Kaveh and A. G. Khovanskii. \emph{A short survey on Newton polytops, tropical geometry and ring of conditions of algebraic torus}. https://arxiv.org/pdf/1803.07001.pdf

\bibitem[Kh97]{Few} A. G. Khovanskii: \emph{Fewnomials (Translations of Mathematical Monographs)}.
Hardcover, 1991
%
%\bibitem[MS15]{MS15} Diane Maclagan and Bernd Sturmfels.
%\emph{Introduction to Tropical Geometry}.  2015
%
%%\bibitem[W38]{Weyl} Н. Weyl: \emph{Mean Motion, Amer.} J Math. 60 (1938), 889—896.

\bibitem[Z02]{Z02} B. Zilber:  \emph{Exponential sums equations and the Shanuel conjecture.}
Journal of the London Mathematical Society, (65:2), 2002, 27-44

\end {thebibliography}

\end{document}